\title{Small Subalgebras of Polynomial Rings\\ and Stillman's Conjecture}
\author{Tigran Ananyan and Melvin Hochster$^1$}
\date{\today}
\theoremstyle{plain}
\newtheorem{theorem}{Theorem}[section]
\newtheorem*{theorema}{Theorem A}
\newtheorem*{theoremb}{Theorem B (existence of small subalgebras)}
\newtheorem*{theoremc}{Theorem C}
\newtheorem*{theoremd}{Theorem D}
\newtheorem*{theoreme}{Theorem E}
\newtheorem*{theoremf}{Theorem F}
\newtheorem{corollary}[theorem]{Corollary}
\newtheorem*{corollarya}{Corollary A}
\newtheorem*{corollaryb}{Corollary B}
\newtheorem*{corollarye}{Corollary E}
\newtheorem{proposition}[theorem]{Proposition}
\theoremstyle{remark}
\newtheorem{remark}[theorem]{Remark}
\newtheorem{discussion}[theorem]{Discussion}
\theoremstyle{definition}
\newcommand{\A}{\mathbb{A}}
\newcommand{\di}{\hbox{dim}}
\newcommand{\inc}{\subseteq}
\newcommand{\ch}{\mathrm{char}}
\newcommand{\cA}{\mathcal{A}}
\newcommand{\cB}{\mathcal{B}}
\newcommand{\cD}{\mathcal{D}}
\newcommand{\cE}{\mathcal{E}}
\newcommand{\cF}{\mathcal{F}}
\newcommand{\cL}{\mathcal{L}}
\newcommand{\cP}{\mathcal{P}}
\newcommand{\cQ}{\mathcal{Q}}
\newcommand{\cT}{\mathcal{T}}
\newcommand{\cV}{\mathcal{V}}
\newcommand{\cG}{\mathcal{G}}
\newcommand{\cI}{\mathcal{I}}
\newcommand{\cM}{\mathcal{M}}
\newcommand{\cN}{\mathcal{N}}
\newcommand{\etA}{{}^\eta\!A}
\newcommand{\etcA}{{}^\eta\!\!\cA}
\newcommand{\etB}{{}^\eta\!B}
\newcommand{\etcB}{{}^\eta\cB}
\newcommand{\etuA}{{}^\eta\!{\underline{A}}}
\newcommand{\fra}{\textrm{frac}}
\newcommand{\height}{\textrm{height}}
\newcommand{\Ker}{\textrm{Ker}}
\newcommand{\N}{\mathbb{N}}
\newcommand{\PP}{\mathbb{P}}
\newcommand{\ov}{\overline}
\newcommand\rom[1]{\text{\textnormal{#1}}}  
\newcommand{\Spec}{\textrm{Spec}}
\newcommand{\surj}{\twoheadrightarrow}
\newcommand{\uA}{\underline{A}}
\newcommand{\Z}{\mathbb{Z}}
\newcommand\vect[2]{#1_1,\,\ldots,\, #1_{#2}}
\def\vect#1#2{{#1}_1, \, \ldots, \, {#1}_{#2}}
\newcommand{\mx}{\begin{pmatrix}}
\newcommand{\emx}{\end{pmatrix}}
\def\todo#1
\def\forth#1
\begin{document}

\begin{abstract}  Let $n, d, \eta$ be positive integers.
 We show that in a polynomial ring $R$ in $N$ variables over an algebraically closed field $K$ 
of arbitrary characteristic, any $K$-subalgebra of $R$ generated over $K$ by at most $n$ forms of 
degree at most $d$ is contained in a $K$-subalgebra of $R$ generated by  $B \leq \etcB(n,d)$ 
forms  $\vect G B$ of degree $\leq d$, where $\etcB(n,d)$ does not depend on $N$ or $K$,
such that these  forms are a regular sequence and such that for any ideal $J$ generated by forms that
are in the $K$-span of $\vect G B$, the ring $R/J$ satisfies the Serre condition $\rom{R}_\eta$.  
These results imply 
a conjecture of M.~Stillman asserting that the projective dimension of an $n$-generator ideal $I$ of $R$ whose 
generators are forms of degree $\leq d$ is bounded independent of $N$.  We also show that there is a primary decomposition of $I$ 
such that all numerical invariants of the decomposition (e.g., the number of primary components and the degrees and numbers of 
generators of all of the prime and primary ideals occurring) are bounded independent of $N$.    \end{abstract}

\subjclass[2000]{Primary 13D05, 13F20}

\keywords{polynomial ring, ideal, form, projective dimension, regular sequence, \rom{R}$_\eta$}

\thanks{$^1$The second author was partially supported by grants from the National Science Foundation (DMS--0901145 and
DMS--1401384).}

\maketitle

\pagestyle{myheadings}
\markboth{TIGRAN ANANYAN AND MELVIN HOCHSTER}{SMALL SUBALGEBRAS OF POLYNOMIAL RINGS  AND  STILLMAN'S CONJECTURE}

\section{Introduction}\label{intro}

Throughout this paper, let $R= K[x_1, \ldots , x_N]$ denote a polynomial ring over a field $K$.  We usually assume that 
$K$ is algebraically  closed, but the results bounding projective dimension do not need this restriction.
Stillman's conjecture asserts that given a specified number $n$
of forms of specified positive degrees, say at most $d$, there is a bound for the projective dimension of
the ideal $I$ the forms generate that depends on $n$ and $d$ but not on the number $N$ of variables. 
The conjecture is recorded in \cite{PS} and
previous work related to it may be found in \cite{AH1}, where the problem is solved for quadrics, and in \cite{BS, CK, E1,E2,E3, HMMS1, HMMS2, Mc, McS},
where bounds are given for small numbers of quadrics and cubics and examples are given, and the degree
restriction is shown to be needed, based on 
much earlier work in \cite{Br, Bu, Ko}.  We prove Stillman's conjecture in 
a greatly strengthened form, as well as many other results, e.g., Theorems A, B, C, D, E, and F below.  
In fact, we prove that the forms 
are in a polynomial  
$K$-subalgebra generated by a regular sequence with at most $\cB(n,d)$ elements, where $\cB(n,d)$ does not depend
on $K$ or $N$:  we refer to this smaller polynomial ring informally as a ``small" subalgebra.   

In \cite{AH2} a number of  bounds for degrees 2, 3, and 4 are computed.  While some of the arguments
depend on results of this paper, substantially different techniques are used, particularly for the degree 4 case.
One of the reasons that new methods are needed is that we have not been able to make the results of
\S\ref{bound} in this paper sufficiently constructive.   See Remarks \ref{AH2ad1}  and \ref{AH2ad2} for more specific 
information about bounds from \cite{AH2}.

For the purpose of proving Stillman's conjecture one can pass to the case where the field is algebraically
closed, and we shall assume that $K$ is algebraically closed, unless otherwise stated, throughout the rest of
this paper.

We use $\N$ to denote the nonnegative integers and $\Z_+$ the positive integers.
We define a nonzero homogeneous polynomial $F$ of positive degree in $R$ to have a $k$-{\it collapse}
for  $k \in \N$, if $F$  is in an ideal generated by $k$ homogeneous elements of strictly
smaller positive degree, and we define $F$ to have {\it strength} $k$ 
if it has a ${k+1}$-collapse but no
$k$-collapse.   We shall also say that $F$ is $k$-{\it strong} if it has no $k$-collapse, which means that its
strength is at least $k$.
Because nonzero linear forms do not have a $k$-collapse for any $k \in \N$, we make
the convention that such a form has strength $+\infty$.  A form has strength at least 1
if and only if it is irreducible.  One of the main themes here is that $F$ has a ``small" collapse if and only
if the singular locus of $F$ has ``small'' codimension.   ``Only if" is evident:  when $F = \sum_{i=1}^k G_iH_i$,
the partial derivatives of $F$ are in the $2k$-generated ideal $(G_i,\, H_i:1 \leq i \leq k)R$.  ``If" is
quite difficult:  a precise statement is made in part (a) of Theorem A below. 

We use $V$ to denote a finite-dimensional graded vector subspace of $R$ spanned by forms of positive
degree.  If $d$ is an upper bound for the degree of any element of $V$, we may write 
$V = V_1 \oplus \ldots \oplus V_i \oplus \ldots \oplus V_d$, where $V_i$ denotes the $i\,$th graded piece,  
and we shall say 
$V$ has {\it dimension sequence} $(\vect \delta d)$ where $\delta_i := \di_K(V_i)$.  This sequence
carries the same information as the Hilbert function of $V$.  
 We regard two such dimension sequences
as the same if they become the same after shortening by omitting the rightmost string of consecutive 0 entries. 

For $V$ as in the preceding paragraph, we say that $V$ has {\it strength} at least $k$ or is $k$-{\it strong} if every nonzero homogeneous
element of $V$ is $k$-strong.

If $F$ is a form of degree $d$ in $K[\vect x N]$, we denote by $\cD F$ the $K$-vector space spanned by the partial derivatives  $\partial F/\partial x_i$,  $1 \leq i \leq N$.  When the characteristic does not
divide $d$, we have that $F \in (\cD F)$, the ideal generated by $\cD F$, since Euler's formula 
asserts that $$\deg\,(F)F = \sum_{i=1}^N x_i (\partial F/\partial x_i).$$

If $\sigma$ is a subset of a polynomial ring $R = K[\vect x N]$, where $K$ is an algebraically closed field, we write 
$\cV(\sigma)$ for the algebraic set in $\A^N_K$ where the elements of $\sigma$ all vanish.  

We recall that a Noetherian ring $S$ satisfies the Serre condition $R_\eta$,  where $\eta$ is a nonnegative integer, if
$S_P$ is regular for every prime $P$ of height $\leq \eta$.  If the singular locus of $S$ is closed and defined by an ideal $J$,
this is equivalent to the condition that $J$ have height at least $\eta +1$.  (By convention, the unit ideal has  height $+\infty$.)
We shall say that a sequence of elements generating a proper ideal of a ring $S$  is a {\it prime} sequence 
(respectively, if $S$ is Noetherian, an  $\rom{R}_\eta$-sequence,
where $\eta \in \Z_+$), if the quotient of $S$ by the ideal generated by any initial segment is a domain (respectively,
satisfies $\rom{R}_\eta$). A prime sequence in $S$ is always a regular sequence.  If $S = R$ is a polynomial ring, 
every $\rom{R}_\eta$-sequence of forms of positive
degree for $\eta \geq 1$ is a prime sequence (in fact, the
quotients are normal domains), and, hence, a regular sequence.  Note that if the regular sequence is such that 
the successive quotients are normal, then the sequence must be an R$_1$-sequence.

We call a function $\cB:\N^ h \to \Z$ {\it ascending} if it is nondecreasing in each input
when the others are held fixed.  In all our constructions of  functions, it is easy to make them ascending:
replace the function  $\cB$ by the one whose value on $(\vect b h)$ is $\max\{\cB(\vect a h): 0 \leq a_i \leq b_i
\hbox{ for } 1\leq i \leq h\}$.  A $d$-tuple of integer-valued functions on $\N^h$ will be called {\it ascending}
if all of its entries are ascending functions. 

The main results are stated below.  The proofs are given in \S\ref{proof}, after some preliminary results are
established in \S\ref{lemmas} and \S\ref{bound}.

We want to emphasize that in Theorem~A below, the functions $\etA$,  $\uA$, and $\etuA$ {\it do not depend} on the field
$K$ nor on the number of variables $N$.  This fact is central to their use in proving Stillman's conjecture and related
results. 

\begin{theorema} \begin{enumerate}[(a)]
\item There exists an integer $\etA(d) \geq d-1 \geq 0$, ascending as a function of $\eta,\, d \in \Z_+$, such that 
for every algebraically closed field $K$ and for every positive integer $N$, if $R = K[\vect x N]$ is a polynomial
ring and $F \in R$ is a form of degree $d \geq 1$
of strength at least $\etA(d)$, then the codimension of the singular locus in $R/FR$ is at least $\eta +1$, so that $R/FR$ 
satisfies the Serre condition $\rom{R}_\eta$.  

\item There are ascending functions $\uA = (\vect A d)$ and, for every integer $\eta \geq 1$, $\etuA = (\vect \etA d)$ 
from dimension sequences $\delta = (\vect \delta d) \in \N^d$  to $\N^d$ with the following property:

For every algebraically closed field $K$ and every positive integer $N$, if $R = $ $K[\vect x N]$ is a polynomial ring,
and $V$ denotes a graded $K$-vector subspace of $R$ of vector space dimension $n$  
with dimension sequence $(\vect \delta d)$, such that for $1 \leq i \leq d$,  the strength of
every nonzero element of $V_i$ is at least $A_i(\delta)$  (respectively, $\etA_i(\delta)\,$),
then every sequence of $K$-linearly independent forms in $V$ is a regular sequence (respectively,
is an $\rom{R}_\eta$-sequence). 

\item If we have the functions $\etA(i)$ described in (a) for $1 \leq i \leq d$,  we may
take  $\etA_i(\delta) = \etA(i) + 3(n-1)$, where $n = \sum_{j=1}^d \delta_j$,   and these functions 
will have the property described in part (b).
\end{enumerate} 
\end{theorema}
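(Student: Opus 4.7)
The plan is to establish the first paragraph as the main obstacle, then derive the second and third paragraphs from it by induction on the number of forms in the sequence.

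For the first paragraph I would argue by induction on $d$, with $d=1$ vacuous since linear forms are smooth. For $d\geq 2$, suppose the singular locus of $R/FR$ has codimension $\leq \eta$; equivalently, the ideal generated by $F$ together with the partials $\cD F$ has height $\leq \eta+1$. (When the characteristic does not divide $d$ one has $F \in (\cD F)R$ by Euler's formula, so the height of $(\cD F)R$ is what matters; the small-characteristic case requires a separate but parallel argument.) The objective is to convert this small height into an explicit $k$-collapse of $F$ with $k$ bounded in terms of $\eta$ and $d$ only. My approach would be a generic linear change of coordinates followed by restriction to a carefully chosen complementary subspace: either apply the inductive hypothesis to the partial derivatives (which have degree $d-1$), or extract lower-degree factors of $F$ directly from the small-height Jacobian ideal. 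The main technical difficulty, and the source of the function $\etA(d)$, is converting a height bound on the Jacobian into a uniformly bounded, $N$-independent collapse; this is where I expect most of the real work to lie.

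For the second paragraph I would induct on $i$ along a $K$-linearly independent sequence $F_1,\dots,F_j$ of forms in $V$, preserving the invariant that $R/(F_1,\dots,F_{i-1})$ is $\rom{R}_\eta$ and hence a normal domain. The regular-sequence step reduces to showing $F_i\notin(F_1,\dots,F_{i-1})$: if $F_i = \sum_{k<i} H_k F_k$, then isolating the degree-$d_i$ component and using $K$-linear independence to move scalar combinations of the degree-$d_i$ $F_k$'s to the left side exhibits a nonzero form in $V_{d_i}$ as an element of an ideal generated by at most $n-1$ forms of strictly smaller positive degree. This contradicts $\etA_i(\delta) \geq n - 1$, which is guaranteed since $\etA_i(\delta) = \etA(i) + 3(n-1) \geq (i-1) + 3(n-1)$.

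The delicate step is the $\rom{R}_\eta$ conclusion, where the slack $3(n-1)$ in $\etA_i(\delta) = \etA(i) + 3(n-1)$ does the work. I would attempt an auxiliary lemma asserting that modifying a form by a multiple of one other form of strictly smaller positive degree decreases strength by at most $3$. Cumulatively, this would let me alter $F_i$ by an element of $(F_1,\dots,F_{i-1})$---which does not change the quotient $R/(F_1,\dots,F_i)$---to obtain a lift whose strength in $R$ remains at least $\etA(d_i)$, so that the first paragraph applies and yields the required codimension bound on the singular locus of the cut down $R/(F_i)$. Combining this with the inductive $\rom{R}_\eta$ hypothesis on $R/(F_1,\dots,F_{i-1})$ via the Jacobian criterion for the complete intersection $V(F_1,\dots,F_i)$---controlling the locus where the Jacobian matrix of $(F_1,\dots,F_i)$ drops rank by separating contributions of individual $F_j$ singular loci from genuine rank defects---would complete the induction. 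The principal obstacle within this step is the rigorous ``3-per-form'' strength-loss lemma; with it in hand, the rest is bookkeeping.
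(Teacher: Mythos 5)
Your outline has genuine gaps in both halves. For the first paragraph you correctly reduce to the implication ``height of the Jacobian ideal $\leq k-1$ $\Rightarrow$ bounded collapse of $F$,'' but you then defer exactly this step (``this is where I expect most of the real work to lie'') without supplying a mechanism, and neither of your two suggested routes (generic coordinate change, or applying the degree-$(d-1)$ inductive hypothesis to the partials) closes it. The paper's mechanism is quite specific and rests on the simultaneous induction with Theorems B, D and F in degree $\leq d-1$: choose a maximal regular sequence of at most $k-1$ elements inside $\cD F$, pass to a minimal prime of the ideal it generates that contains $\cD F$, bound the number of generators of that prime by $D(k-1,d-1)$ (which exists because Theorem B in lower degree plus the fixed-number-of-variables bounds of Theorem~\ref{fixedB} give Theorem D there), and then invoke Theorem F (Proposition~\ref{phi}), which says that high strength of $F$ forbids $\cD F$ from lying in an ideal generated by few forms of degree $\leq d-1$. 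Without identifying some such bound on generators of minimal primes over low-degree regular sequences, the height hypothesis on $(\cD F)R$ cannot be converted into an $N$-independent collapse, and this is the heart of the theorem.

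For the second and third paragraphs your ``one hypersurface at a time'' induction does not control the singular locus of the complete intersection. The locus where the $i\times N$ Jacobian matrix of $F_1,\dots,F_i$ drops rank contains points at which every individual row generates a large ideal but the rows become linearly dependent; your sketch names these ``genuine rank defects'' but offers no tool for them. The paper handles them by observing that a rank defect at a point forces all partials of some nonzero combination $F=\sum_j a_jF_j$ to vanish there, decomposing $F$ into homogeneous components of distinct degrees, and then bounding the singular loci of such subsets via a fibration over $\PP^r$ (Theorem~\ref{codimsing}) together with a height bound for ideals of maximal minors of matrices whose rows are forms of \emph{mutually distinct} degrees (Theorem~\ref{MAX}). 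This is why the hypothesis concerns every nonzero element of each $V_i$, not just basis elements, and why the slack $3(n-1)$ appears: it absorbs the losses $h-1$, $n-1$, and $n$ from Theorems~\ref{MAX} and~\ref{codimsing} and from passing to codimension inside $R/(V)R$ --- not a per-form strength loss under modification. Indeed your proposed ``3-per-form'' lemma is both unnecessary and misdirected: replacing $F_i$ by $F_i+HG$ with $\deg G<\deg F_i$ costs at most $1$ in strength, but no such replacement argument reaches the rank-defect locus. Finally, note the paper does not prove $\rom{R}_\eta$ quotient by quotient; it bounds the codimension of the singular locus of $R/(V')R$ in one step for each subspace $V'$ spanned by an initial segment, using only the (inductively established) fact that the basis is a regular sequence as input to Theorem~\ref{codimsing}.
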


\begin{remark} The condition that the singular locus of $R/FR$ have codimension at least $\eta + 1$ in 
$R/FR$,  i.e., that $R/FR$ satisfy the Serre condition $\rom{R}_\eta$, is equivalent to the condition that the 
ideal $FR + (\cD F)R$ have height $\eta + 2$ in $R$.  (If the characteristic is 0 or does not divide $\deg(F)$, 
$F$ is in the ideal $(\cD F)R$.)  \end{remark}
\begin{remark}\label{AH2ad1} It would be of great interest to get specific bounds for  
the functions in Theorem A.   In \cite{AH2}, Theorem 4.20 and Corollary 4.21, it is 
shown that if $V$ is a vector space of quadratic forms in $R$ of dimension $n$ over $K$ 
such that every element of $V-\{0\}$ is $(n-1)$-strong then every sequence of linearly independent elements of 
$V$ is a regular sequence.  
If $\eta \geq 1$ and every element of  $V-\{0\}$ is $(n-1 +\lceil \frac{\eta}{2}\rceil)$-strong,  
then the quotient by the ideal generated 
by any elements of $V$ satisfies the Serre condition \rom{R}$_\eta$.  The corresponding result for
a vector space of cubics  of dimension $n$ over $K$ uses a function in the strength condition that is quadratic in $2n+\eta$. See
\cite{AH2}, Theorem 6.4.  One might hope that for degree $d$,  the function in the strength condition needed for a vector space
of dimension $n$ consisting of $d$-forms might be a polynomial of degree $d-1$ in $n$ and $\eta$. So far as we know,
this might be true.   However, the result obtained for quartics in \cite{AH2}, Corollary 10.4, is somewhat worse than exponential.
\end{remark}

By taking a supremum over values of the $\etA_i$ over all dimension sequences with at most $d$ entries
such that the sum of the entries is at most $n$ we have at once the result mentioned in the abstract:

\begin{corollarya} There is an ascending function $\etcA(n,d)$, independent of $K$ and $N$,  such that for all polynomial
rings $R  = K[\vect x N]$ over an algebraically closed field $K$ and all ideals $I$ generated by a graded vector space
$V$ of dimension $\leq n$ whose nonzero homogeneous elements have positive degree at most $d$,  if no homogeneous element of $V - \{0\}$
is in an ideal generated by $\etcA(n,d)$ forms of strictly lower degree, then $R/I$ satisfies $\rom{R}_\eta$. 
\end{corollarya}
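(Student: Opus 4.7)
The corollary is a purely combinatorial repackaging of Theorem A: we define $\etcA(n,d)$ as the maximum of the values $\etA_i(\delta)$ provided by Theorem A, taken over all dimension sequences $\delta$ of total weight at most $n$ and length at most $d$. The set of dimension sequences $\delta = (\delta_1, \ldots, \delta_d) \in \N^d$ with $\sum_{j=1}^d \delta_j \leq n$ is finite, so the definition
$$\etcA(n,d) := \max\bigl\{\etA_i(\delta) \,:\, 1 \leq i \leq d,\ \delta \in \N^d,\ \textstyle\sum_j \delta_j \leq n\bigr\}$$
produces an integer independent of $K$ and $N$. Since each $\etA_i$ is ascending and the indexing set grows with both $n$ and $d$, $\etcA$ is itself ascending.

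Now suppose $V \subseteq R$ is a graded $K$-subspace of dimension at most $n$ whose nonzero homogeneous elements have positive degree at most $d$, and assume no nonzero homogeneous element of $V$ lies in an ideal generated by $\etcA(n,d)$ forms of strictly smaller positive degree. By the definitions of $k$-collapse and strength given in the introduction, this hypothesis is equivalent to the assertion that every nonzero homogeneous element of $V$ has strength at least $\etcA(n,d)$. Let $\delta$ be the dimension sequence of $V$; then $\sum_j \delta_j = \dim_K V \leq n$, so by the choice of $\etcA(n,d)$ we have $\etcA(n,d) \geq \etA_i(\delta)$ for every $i$, and therefore every nonzero element of $V_i$ has strength at least $\etA_i(\delta)$. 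The hypotheses of Theorem A are thus satisfied, so any $K$-linearly independent sequence of forms in $V$---in particular any homogeneous $K$-basis of $V$---is an $\rom{R}_\eta$-sequence. Since such a basis generates $I$, the definition of $\rom{R}_\eta$-sequence (applied with the full sequence as its own initial segment) gives that $R/I$ satisfies $\rom{R}_\eta$.

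There is no serious obstacle here: once Theorem A has been established, the corollary reduces to the observation that a maximum of finitely many ascending integer-valued functions is ascending, together with the trivial verification that the hypothesis of the corollary matches the strength condition of Theorem A by the definition of $k$-collapse. All of the genuine mathematical content lives in Theorem A.
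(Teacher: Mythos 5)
Your proposal is correct and is exactly the paper's argument: the authors dispose of Corollary A in one sentence by taking the supremum of the $\etA_i(\delta)$ over all dimension sequences with at most $d$ entries summing to at most $n$, which is precisely your definition of $\etcA(n,d)$. Your elaboration (finiteness of the indexing set, the translation between the collapse hypothesis and the strength condition, and the passage from the $\rom{R}_\eta$-sequence conclusion to $R/I$ satisfying $\rom{R}_\eta$) fills in the routine details the paper leaves implicit.
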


We use Theorem A to prove:

\begin{theoremb}\label{small}  
There is an  ascending function $B$ from dimension sequences $\delta = (\vect \delta d)$ to $\Z_+$
with the following property. 
If $K$ is an algebraically closed field and
$V$ is a finite-dimensional $\Z_+$-graded $K$-vector subspace of a polynomial ring $R$ over $K$  with  
dimension sequence $\delta$,  then $V$ (and, hence, the $K$-subalgebra of $R$ generated by $V$) 
is contained in a $K$-subalgebra of $R$ generated by a regular sequence $\vect G s$ of forms of degree at most $d$, where
$s \leq B(\delta)$.

Moreover, for every $\eta \geq 1$ there is such a function $\etB$ with the additional property that every sequence consisting
of linearly independent homogeneous linear combinations of the elements in  $\vect G s$ is an $\rom{R}_\eta$-sequence.
\end{theoremb}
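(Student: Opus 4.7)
The plan is to prove both the regular-sequence statement and the $\rom{R}_\eta$-sequence refinement simultaneously by well-founded induction on the set of dimension sequences, using Theorem~A as the base case. I would well-order dimension sequences by the ordinal $\Omega(\delta) := n_1 + \omega\, n_2 + \omega^2 n_3 + \cdots + \omega^{d-1} n_d$, padding with trailing zeros when comparing sequences of different lengths. Given a graded vector subspace $V \subseteq R$ with dimension sequence $\delta$, I first test the strength hypothesis of Theorem~A's second paragraph: for every index $i$ and every nonzero homogeneous $F \in V_i$, does $F$ have strength at least $\etA_i(\delta)$? If so, Theorem~A immediately produces the required sequence, since any basis $\vect G s$ of $V$ (with $s = \sum_i n_i$) has the property that every $K$-linearly independent homogeneous combination of the $G_l$'s is an $\rom{R}_\eta$-sequence, and in particular $\vect G s$ itself is a regular sequence.

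If the hypothesis fails, let $i$ be the largest index at which some nonzero $F \in V_i$ violates it, and write $F = \sum_{l=1}^{k} G_l H_l$ with $k < \etA_i(\delta)$ and each $G_l, H_l$ homogeneous of positive degree strictly less than $i$. Pick a graded $K$-complement $V''_i$ of $KF$ in $V_i$ and form the graded subspace $V' := \bigl(\bigoplus_{j \neq i} V_j\bigr) \oplus V''_i + \langle G_l, H_l : 1 \leq l \leq k\rangle$ of $R$. Since $F = \sum_l G_l H_l$ lies in the $K$-subalgebra generated by $V'$, we have $V \subseteq K[V']$. The new dimension sequence $\delta'$ satisfies $\delta'_i = n_i - 1$, $\delta'_j \leq n_j + 2k$ for $j < i$, and $\delta'_j = n_j$ for $j > i$; a direct ordinal calculation gives $\Omega(\delta') < \Omega(\delta)$, because the $\omega^{i-1}$ lost at position $i$ absorbs the total lower-position increase, which is bounded by $2k\cdot\omega^{i-2} < \omega^{i-1}$. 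The inductive hypothesis applied to $V'$ yields a regular sequence $\vect G s$ (respectively, an $\rom{R}_\eta$-sequence) whose $K$-algebra contains $V'$, hence contains $V$; the additional property on linearly independent combinations is inherited because the sequence at the base case spans a vector space $W$ satisfying the strength hypothesis of Theorem~A.

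The function $\etB(\delta)$ is then defined by the recursion
$\etB(\delta) := \max\bigl\{\sum_i n_i,\, \max_{\delta'} \etB(\delta')\bigr\}$,
where the inner maximum ranges over the finite set of dimension sequences reachable in one replacement step (over all eligible $i$ and all $k < \etA_i(\delta)$); this is well-defined by well-founded induction on $\Omega$, and can be made ascending by the maximum prescription of the preamble. The regular-sequence function $B(\delta)$ is defined analogously, with $A_i$ in place of $\etA_i$. The main obstacle is the termination argument: because $\etA_i(\delta)$ itself grows with $\delta$, and because each replacement step can \emph{increase} the lower-degree entries of $\delta$, no monotone integer-valued potential on $\delta$ can work, so one is committed to a transfinite-ordinal invariant. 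The reverse-lexicographic ordinal $\Omega$ is tailored to this situation precisely because only the topmost violating coordinate is decremented per step, while the number of newly introduced lower-degree generators is bounded by the finite quantity $2k$, which is dominated by the single unit of $\omega^{i-1}$ given up.
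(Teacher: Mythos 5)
Your proposal is correct and follows essentially the same route as the paper: the reverse-lexicographic well-ordering of dimension sequences (which the paper states directly rather than via an ordinal), the dichotomy between the case where $V$ satisfies the strength hypothesis of Theorem~A and the case where some form has a collapse and is replaced by the at most $2\cdot\etA_i(\delta)$ lower-degree factors, and the definition of $\etB(\delta)$ as a maximum over the finitely many predecessor sequences reachable in one step. The only cosmetic point is that your ordinal should be written in Cantor normal form $\omega^{d-1}n_d + \cdots + \omega n_2 + n_1$ (as literally written, ordinal addition absorbs the lower terms), but the intended order is exactly the paper's well-ordering and the termination argument is sound.
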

\begin{discussion}\label{groth}  We note, for example, that this theorem implies for $\eta \geq 3$ that all the quotients of $R$ by  
ideals generated by homogeneous
linear combinations of the elements in $\vect G s$ are
unique factorization domains:  this follows at once from a theorem of Grothendieck, conjectured by Samuel,
for which there is an elementary exposition in \cite{Ca}.  \end{discussion}

By taking a supremum over all dimension sequences with at most $d$ entries such that the sum of the entries
is at most $n$,  we have at once:

\begin{corollaryb}
There is an ascending function $\etcB(n,d)$, independent of $K$ and $N$,  such that for all polynomial
rings $R  = K[\vect x N]$ over an algebraically closed field $K$ and all graded vector subspaces $V$ of $R$ of dimension
at most $n$ whose homogeneous elements have positive degree at most $d$, the elements of $V$ are contained
in a subring  $K[\vect G B]$,  where $B \leq \etcB(n,d)$ and $\vect G B$ is an $\rom{R}_\eta$-sequence of forms
of degree at most $d$. \end{corollaryb}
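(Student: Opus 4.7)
The plan is to derive Corollary B as an immediate consequence of Theorem B by a uniform supremum, exactly as indicated by the sentence preceding the corollary statement. Concretely, I would set
\[
\etcB(n,d) := \max\bigl\{\etB(\delta_1,\ldots,\delta_d) : (\delta_1,\ldots,\delta_d) \in \N^d,\ \textstyle\sum_{i=1}^d \delta_i \leq n\bigr\}.
\]
The set over which the maximum is taken is finite (it is contained in the cube $\{0,1,\ldots,n\}^d$), so the max is attained and yields a well-defined $\Z_+$-valued function of $(n,d)$ for each fixed $\eta$. Ascendingness is automatic: increasing $n$ or $d$ only enlarges this indexing set, and so $\etcB(n,d)$ cannot decrease. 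Here I tacitly identify a ``dimension sequence with at most $d$ entries'' with a $d$-tuple by padding on the right with zeros; this matches the convention already introduced in the paper that sequences agreeing after trimming trailing zeros are regarded as the same.

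Given an arbitrary $V \subseteq R = K[\vect x N]$ as in the hypothesis---graded, of dimension at most $n$, with homogeneous elements of positive degree at most $d$---I would set $\delta_i := \dim_K V_i$ for $1 \leq i \leq d$. Then $\delta := (\delta_1,\ldots,\delta_d)$ is one of the dimension sequences over which the supremum is taken, since $\sum_i \delta_i = \dim_K V \leq n$. Applying Theorem B to $V$ produces a regular sequence $\vect G s$ of forms of degree at most $d$ with $s \leq \etB(\delta) \leq \etcB(n,d)$, such that $V \subseteq K[\vect G s]$, and such that every sequence of $K$-linearly independent homogeneous linear combinations of the $G_j$ is an $\rom{R}_\eta$-sequence. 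Since a regular sequence of forms in $R$ is automatically $K$-linearly independent, the sequence $\vect G s$ itself qualifies, and hence is an $\rom{R}_\eta$-sequence. This delivers the statement of Corollary B.

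There is no real obstacle in this argument: it is a purely formal packaging step. All the substantive content---the existence of a regular sequence containing $V$, the uniform degree and length bounds, and the $\rom{R}_\eta$ property of arbitrary linearly independent homogeneous combinations---was already placed in Theorem B. The only things to verify for Corollary B are the finiteness of the indexing set (which makes the supremum a maximum in $\Z_+$) and the observation that $\vect G s$ itself is a permissible sequence of homogeneous linear combinations of its own members.
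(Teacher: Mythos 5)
Your proposal is correct and matches the paper's own derivation, which obtains Corollary B from Theorem B precisely by taking a supremum of $\etB(\delta)$ over all dimension sequences with at most $d$ entries summing to at most $n$. The additional details you supply (finiteness of the index set, ascendingness, and that $\vect G s$ is itself a sequence of linearly independent homogeneous combinations of its members) are exactly the routine verifications the paper leaves implicit.
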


We want to emphasize that in Theorems C, D, E and Corollary E below,  given elements and entries are not
necessarily assumed to be homogeneous:  one obtains the results by passing to a subalgebra that contains all of their
homogeneous components. Note that the degree of a polynomial provides an upper bound for the number of its positive degree
homogeneous components with no reference to the base field nor to the number of variables. \smallskip

Theorem B easily implies a strong form of  M.~Stillman's Conjecture:

\begin{theoremc}\label{matrix}  There is an ascending 
function $C$ from $\Z_+ \times \Z_+ \times \Z_+ \to \Z_+$ with the following property.  If $R$ is a polynomial ring over an arbitrary field $K$ and $M$ is a module that is the 
cokernel of an $m \times n$ matrix whose (not necessarily homogeneous) entries have degree at most $d$,  then the projective dimension of $M$ is bounded by $C(m,n,d)$. \end{theoremc}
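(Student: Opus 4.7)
The plan is to apply Corollary B to descend the presentation of $M$ to a polynomial subring $A\subseteq R$ whose number of generators is bounded in terms of $m, n, d$ alone, and then invoke Hilbert's syzygy theorem on $A$. First, reduce to the case $K$ algebraically closed: since $\bar K\otimes_K R$ is faithfully flat over $R$ and the presentation matrix of $\bar K\otimes_K M$ is the same as that of $M$, projective dimension is preserved, so assume $K=\bar K$. Next, let $V\subseteq R$ be the $K$-span of all homogeneous components of positive degree of the $mn$ matrix entries. Then $V$ is a graded $K$-subspace of $R$ of dimension at most $mnd$ whose nonzero homogeneous elements have positive degree at most $d$. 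Apply Corollary B with $\eta=1$ to obtain forms $\vect G B$ in $R$, with $B\leq\etcB(mnd,d)$, which form an $\rom{R}_1$-sequence (hence a regular sequence) and satisfy $V\subseteq A:=K[\vect G B]$. Since $A\supseteq K$ absorbs the degree-zero part of each entry, every matrix entry lies in $A$.

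Next, I would establish that $R$ is flat (in fact free) over $A$. The homogeneous regular sequence $\vect G B$ is algebraically independent over $K$ (a homogeneous algebraic relation would produce a zero divisor), so $A$ itself is a polynomial ring in $B$ variables. The Koszul complex on $\vect G B$ over $A$ is a graded free resolution of $K = A/(\vect G B)A$; base-changing to $R$ gives the Koszul complex on $\vect G B$ over $R$, which is acyclic in positive degrees because $\vect G B$ is $R$-regular. Hence $\mathrm{Tor}_i^A(K,R)=0$ for $i\geq 1$. Lifting a homogeneous $K$-basis of $R/(\vect G B)R$ to $R$ and applying graded Nakayama (valid since all the relevant graded modules are bounded below) shows that the lift is in fact an $A$-basis of $R$.

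Finally, view the given matrix as an $m\times n$ matrix over $A$ with cokernel $M_A$; then $M\cong M_A\otimes_A R$. Flatness of $R$ over $A$ makes any projective $A$-resolution of $M_A$ base-change to a projective $R$-resolution of $M$, so $\pd_R(M)\leq\pd_A(M_A)\leq B$ by Hilbert's syzygy theorem applied to the polynomial ring $A$ in $B$ variables. Thus $C(m,n,d):=\etcB(mnd,d)$ suffices (passing to an ascending majorant if needed). The main obstacle is the third step---flatness of $R$ over $A$; the remaining steps are formal, but it is flatness that converts mere containment of the entries in a small subring into a genuine upper bound on $\pd_R(M)$.
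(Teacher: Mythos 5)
Your proposal is correct and follows essentially the same route as the paper: collect the positive-degree homogeneous components of the $mn$ entries into a graded space of dimension at most $mnd$, apply Theorem B/Corollary B to place them in a polynomial subalgebra $A=K[\vect G B]$ generated by a bounded regular sequence of forms, and conclude via Hilbert's syzygy theorem over $A$ together with flatness of $R$ over $A$ (which the paper gets from $\vect G B$ being part of a homogeneous system of parameters and $R$ being Cohen--Macaulay, rather than your equivalent Koszul/graded-Nakayama computation). You have merely made explicit the reduction to $K$ algebraically closed and the freeness of $R$ over $A$, both of which the paper treats as known.
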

\begin{remark}\label{AH2ad2} In \cite{AH2}, Theorem 4.22,  it is shown that a $K$-vector space $V$ of quadrics of 
dimension $n$  in the polynomial ring $R$  is contained in a polynomial subring
generated by a regular sequence consisting of at most $2^{n+1}(n-2)+4$ linear and quadratic forms, and so this number also  
bounds the projective dimension of $R/I$ over $R$,  where $I$ is the ideal generated $V$.  
The corresponding result for cubics is not made explicit in \cite{AH2}:  it is $n$-fold exponential.  See \cite{AH2},
Discussion 6.6.  It may be that much smaller bounds exist, especially for projective dimension.  In the case
of quadrics, the bound for projective dimension may be quadratic in $n$. See  \cite{HMMS1, HMMS2}. \end{remark}

Theorem B yields many other bounds.  In the following theorem we give bounds on a finite free resolution and on a primary
decomposition.  The resolution and the primary decomposition are not unique, so what we mean is that there exists
some finite free resolution and some primary decomposition for which the bounds hold.

\begin{theoremd}\label{bdev} Let $K$ be an algebraically closed field and let $R = K[\vect xN]$ be 
the polynomial ring in $N$ variables over $K$.   Let $m, n, d \in \Z_+$, let $\cM$ be an $m \times n$ matrix over 
$R$ whose (not necessarily homogeneous)  entries have degree at most $d$, let
$M$ be the column space of $\cM$.  
\begin{enumerate}[(a)]

\item There exists an ascending function $P(m,n,d)$, independent of $N$ and $K$, that bounds the length of a finite free resolution of $M$, the ranks of the free modules occurring, and the degrees 
of all of the entries of all of the matrices occurring.  Hence, $P(m,n,d)$ bounds sets of generators for the modules of syzygies associated with the resolution.  
 In the graded case, $P(m,n,d)$ bounds the twists of $R$ that
occur as summands in a minimal free resolution of $M$. 

\item There exists an ascending function $E(m,n,d)$, independent of $N$ and $K$, that bounds
the number of primary components in an irredundant primary decomposition of $M$ in $R^m$, 
the number of and the degrees of the generators of every prime ideal occurring, and 
the number of generators and the degrees of the entries
of the generators for every module in the decomposition.  $E(m,n,d)$ can also be taken to bound the exponent
on every associated  prime ideal  $P$ needed to annihilate the corresponding $P$-coprimary
component  of $M$ mod $M$ (in the ideal case, the exponent $a$ needed so that $P^a$ is contained 
in the corresponding primary ideal of the decomposition). 

\item There exists an ascending function $D(k,d)$, independent of $N$ and $K$, that bounds the minimum number of 
generators of any minimal prime of an ideal generated by a regular sequence consisting of $k$ or fewer $d$-forms.

\end{enumerate}
\end{theoremd}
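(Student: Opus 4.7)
The plan is to deduce Theorem D from Theorem B by passing, in each part, to a polynomial $K$-subalgebra $T \subseteq R$ of Krull dimension bounded solely in terms of the given data, and then transferring bounds back to $R$ via a flat base change. Common setup: apply Theorem B with $\eta$ to be fixed large enough to the $\Z_+$-graded $K$-subspace $V \subseteq R$ spanned by the entries of $\cM$ (in parts (a) and (b)) or by $\vect f k$ (in part (c)). This produces forms $\vect G s$ with $s$ ascendingly bounded in $(m,n,d)$ or $(k,d)$, forming an $\rom{R}_\eta$-sequence in $R$ with $V \subseteq T := K[\vect G s]$. A regular sequence of positive-degree forms is algebraically independent over $K$, so $T$ is a polynomial ring in $s$ graded variables. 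The Koszul complex on $\vect G s$ is a graded free $T$-resolution of $T/\mathfrak{m}_T$, and tensoring with $R$ yields the Koszul complex of the regular sequence $\vect G s$ in $R$, exact in positive homological degree; hence $\mathrm{Tor}_i^T(T/\mathfrak{m}_T,R)=0$ for $i \geq 1$, so $R$ is flat over $T$.

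For part (a), read $\cM$ as a matrix over $T$; its column module $\widetilde M \subseteq T^m$ satisfies $M \cong R \otimes_T \widetilde M$. By Hilbert's syzygy theorem, $\pd_T \widetilde M \leq s$, and Castelnuovo--Mumford regularity bounds of Bayer--Mumford type for a $T$-module generated by $n$ forms of degree $\leq d$ in a polynomial ring of dimension $s$ yield an ascending bound in $(m,n,d)$ on the ranks of the free modules and the degrees of the entries of all matrices in a minimal graded free $T$-resolution of $\widetilde M$. Flat base change $- \otimes_T R$ then gives a finite free $R$-resolution of $M$ with the same numerical data, establishing $P(m,n,d)$; in the graded case the twists of $R$ that appear are exactly those in the $T$-resolution.

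For parts (b) and (c), analogous classical bounds over the polynomial ring $T$ control the number and generation complexity of the primary components of $\widetilde M$ in $T^m$ (respectively the minimal primes of $(\vect f k)T$) and the exponents needed to annihilate each coprimary component modulo $\widetilde M$. The main task, and the main obstacle, is to transfer this primary decomposition faithfully across $T \inj R$ with all numerical invariants preserved: one must show that each associated prime $\mathfrak{p}$ of $\widetilde M$ in $T$ (respectively each minimal prime of $(\vect f k)T$) extends to a prime $\mathfrak{p}R$ of $R$ with $R/\mathfrak{p}R$ a domain, so that flat base change matches the respective lists. Here one invokes Theorem B again at the level of the bounded generating sets of these finitely many primes: each such $\mathfrak{p}$ is generated by a bounded number of forms of bounded degree, and by iterated application of Corollary B with $\eta$ sufficiently large one obtains, via the resulting $\rom{R}_\eta$-sequence structure and the UFD property of the appropriate quotients of $R$ (for $\eta \geq 3$), the needed primality of $\mathfrak{p}R$ and integrality of $R/\mathfrak{p}R$. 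Combined with flatness of $R$ over $T$, this matches associated primes, primary components, and minimal primes across $T \inj R$, carrying the $T$-bounds to $R$ and producing the functions $E(m,n,d)$ of (b) and $D(k,d)$ of (c).
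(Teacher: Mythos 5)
Your overall architecture coincides with the paper's: use Theorem B to place the relevant forms in a polynomial subring $T=K[\vect G s]$ with $s$ bounded, observe that $R$ is free (hence flat) over $T$, prove the bounds over $T$ (a polynomial ring in a bounded number of variables), and transfer them back by base change. For part (a) this is essentially complete, and your substitution of classical regularity/syzygy bounds in $s$ variables for the paper's self-contained stratification argument (Theorem~\ref{fixedB}, proved via generic freeness and Noetherian induction on a coefficient ring over $\Z$) is a reasonable alternative, provided you note that the cited bounds are uniform in the field $K$ (this is why the paper proves its own version rather than quoting Hermann--Seidenberg type results).

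The genuine gap is in your justification of the key transfer step for parts (b) and (c): the primality of $\mathfrak{p}R$ for a prime $\mathfrak{p}$ of $T$. You propose to get this by applying Corollary B to a bounded generating set of $\mathfrak{p}$ and invoking the UFD property of quotients of $R$ by ideals generated by $\rom{R}_3$-sequences. This does not work: the generators of $\mathfrak{p}$ need not form a regular sequence (a prime of height $h$ may require far more than $h$ generators), so Theorem A/B gives no information about the quotient by the ideal they generate; and placing those generators inside a small subalgebra $K[\vect H {B'}]$ with $\vect H{B'}$ an $\rom{R}_\eta$-sequence only tells you that $R/(\vect H{B'})$ is a UFD, which controls height-one primes of that quotient, not the primality of $\mathfrak{p}R$. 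The paper's actual mechanism is different and is the content of Theorem~\ref{primeHilb}, Proposition~\ref{grprime}, and Corollary~\ref{expand}: since $T$ is a Hilbert ring and $T\to R$ is intersection flat (being free), primality of $\mathfrak{p}R$ for \emph{all} primes $\mathfrak{p}$ of $T$ reduces to primality of $mR$ for the \emph{maximal} ideals $m=(G_1-c_1,\ldots,G_s-c_s)$ of $T$, and for these the generators have leading forms $\vect G s$ which form a prime sequence in $R$, so Proposition~\ref{grprime} applies. You need this (or the equivalent geometric-integrality-of-fibers argument via EGA IV 12.1(viii)) to close the argument; once $\mathfrak{p}R$ is known to be prime, flatness plus Proposition~\ref{primary} does match up associated primes and primary components as you assert.
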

\begin{remark} Part (c) is obvious from part (b),  since we may take $D(k,d) = E(1,k,d)$.  However,
the function $D(k,d)$ plays a special role in the proofs, and may have a much smaller bound. \end{remark}

Free resolutions are not unique, but the specified bounds work for at least one free resolution.
Similarly, primary decompositions are not unique, but the specified bounds work for at least one irredundant primary
decomposition of $M$ in $R^m$.  Of course, when $m=1$ we are obtaining such a bound for the primary
decomposition of an ideal with $n$ generators when the degrees of the generators are at most $d$.

 We shall
refer to the largest degree of any entry of a nonzero element  $v$ of the free module $R^m$ over the polynomial
ring $R$ as the {\it degree} of $v$.  We shall say that a set of generators for a submodule of $R^m$ is {\it bounded}
by $n$, $d$ if it has at most $n$ elements of degree at most $d$. If $n = d$, 
we say that the set of generators is bounded by $n$.

\begin{theoreme} There exist ascending $\Z_+$-valued functions $\Theta(m,n,r,d)$, $\Lambda(m,n,d,h)$, and 
$\Gamma(m,n,d)$ of the nonnegative integers  $h\geq 2,\,m,\,n,\,r,\,d$ with the following properties. 
Let $R = K[\vect x N]$ be a polynomial ring over an algebraically closed field 
$K$.  Let $G:=R^m$. 
Let $M, \, Q$ and $\vect Mh$ be submodules of $G$.  Let $I$ be an ideal of $R$.  Suppose that
all of $M, Q$, $\vect Mh$, and $I$ have sets of generators bounded by $n$, $d$.
\begin{enumerate}[(a)]
\item Given an $m \times r$ matrix over $R$ with entries of degree at most $d$, thought of as a map from $R^r \to R^m$ and $M \inc R^m$ as above, 
there is a set of generators for
$\Ker (R^r \to R^m \surj R^m/M)$ bounded by $\Theta(m,r,n,d)$.
\item There exists a set of generators for $M_1 \cap \cdots \cap M_h$ bounded by $\Lambda(m,n,d,h)$.
\item There exist
sets of generators for $M:_R Q$, $M:_G I$, and $M:_G I^{\infty}$ bounded by $\Gamma(m,n,d)$. 

\end{enumerate}
\end{theoreme}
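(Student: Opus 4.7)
The plan is to derive all three parts from Theorem D, each part building on the previous. Part (a) follows directly from Theorem D(a) on bounded free resolutions; part (b) is a formal reduction to (a); and for part (c) the finite colons reduce to (a) and (b), while the saturation $M :_R I^\infty$ is handled via the primary-decomposition bound of Theorem D(b).

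For (a), let $\phi: R^r \to R^m$ be the given map, with matrix of degree $\leq d$, and let $\psi: R^n \to R^m$ be the $m \times n$ matrix of degree $\leq d$ whose columns are the given generators of $M$.  Form the block matrix $\Phi = [\phi \mid -\psi]: R^{r+n} \to R^m$, again of degree $\leq d$.  By Theorem D(a) applied to the column space of $\Phi$, the first syzygies of the columns of $\Phi$ are generated by $\leq P(m, r+n, d)$ elements of degree $\leq P(m, r+n, d)$ in $R^{r+n}$.  By construction,
\[
\Ker(\Phi) = \{(u,v) \in R^{r+n} : \phi(u) = \psi(v) \in M\},
\]
so projecting onto the first $r$ coordinates yields $\phi^{-1}(M) = \Ker(R^r \to R^m/M)$.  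Projection preserves both the generator-count and the degree bounds, so we may take $\Theta(m,r,n,d) := P(m, r+n, d)$.

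For (b), observe that
\[
M_1 \cap \cdots \cap M_h = \Ker\bigl(G \to G^h / (M_1 \oplus \cdots \oplus M_h)\bigr),
\]
the diagonal being an $mh \times m$ matrix of $0$'s and $1$'s and the direct sum having $hn$ generators of degree $\leq d$.  Part (a) then yields $\Lambda(m,n,d,h) := \Theta(mh, m, hn, d)$.  For the two finite colons in (c), decompose $M :_G I = \bigcap_{i=1}^n (M :_G f_i)$, where $\vect f n$ are the given generators of $I$; each $M :_G f_i = \Ker(G \xrightarrow{\cdot f_i} G/M)$ is bounded by (a) applied to the scalar map $f_i I_m$, and the intersection is then bounded by (b).  Symmetrically, $M :_R Q = \bigcap_{j=1}^n (M :_R q_j)$ with each $M :_R q_j = \Ker(R \xrightarrow{\cdot q_j} G/M)$ bounded by (a) with $r = 1$.

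For the saturation $M :_R I^\infty$, I would invoke Theorem D(b) to fix an irredundant primary decomposition $M = Q_1 \cap \cdots \cap Q_s$ in $G$, in which $s$ and the generators of every $Q_i$ and every associated prime $P_i$ are bounded by $E(m,n,d)$.  Then
\[
M :_R I^\infty = \bigcap_{P_i \not\supseteq I} Q_i,
\]
and one further application of (b) bounds this intersection.  I expect this last step to be the main obstacle, not because the reduction is formally difficult, but because it depends entirely on an $N$-independent bound for primary decomposition, which is only available through Theorem D(b)---itself one of the deepest outputs of the framework built on Theorem B and Corollary A.  Once Theorem D is granted, the rest of Theorem E is essentially formal.
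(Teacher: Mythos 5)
Your proposal is correct, but it is organized differently from the paper's argument. The paper proves Theorems D and E in parallel, in a single stroke: by Corollary B all the forms involved (the entries of the matrix $R^r \to R^m$ and of the chosen generators of $M$, $Q$, the $M_i$, and $I$) lie in a polynomial subalgebra $K[\vect G B]$ with $B$ bounded independently of $N$ and $K$, where $\vect G B$ is an $\rom{R}_\eta$-sequence; Theorem~\ref{fixedB} --- the generic/constructible-stratification result, whose bounds depend only on the number $B$ of variables --- then bounds the kernel, intersection, colon, and saturation computations over $K[\vect G B]$; and since $K[\vect G B] \to R$ is free, hence faithfully flat, kernels, finite intersections, and colons commute with the base change, while the saturation is controlled because primary decompositions extend up to $R$ (Corollary~\ref{expand} and Proposition~\ref{primary}). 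You instead take Theorem D as a black box and reduce E to it formally: the block matrix $[\phi \mid -\psi]$ for (a), the diagonal $G \to G^h$ for (b), colons as intersections of kernels, and the saturation as the intersection of the primary components whose primes do not contain $I$, with the decomposition data bounded by $E(m,n,d)$. This is not circular, since the paper's proof of D nowhere uses E, and your reductions are standard and correct (in particular, projecting a generating set of $\Ker[\phi\mid-\psi]$ onto the first $r$ coordinates does generate $\phi^{-1}(M)$ without increasing degrees, and $M:_G I^\infty = \bigcap\{Q_i : I \not\subseteq P_i\}$ is the right formula). What your route buys is that the flat-base-change and prime-extension bookkeeping is done once, inside Theorem D, rather than repeated for E; what it costs is a slight loss of directness, since the paper's Theorem~\ref{fixedB} already contains parts (a)--(c) of Theorem E verbatim in the fixed-variable-count setting.

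One point to tighten: in (a) you invoke Theorem D(a) as bounding generators of the first syzygy module of the \emph{given} columns of $\Phi$, whereas D(a) as stated bounds the data of \emph{some} finite free resolution of the column space, which need not begin with the given generating set; passing from the syzygies of one generating set to those of another requires lifting the given generators through the bounded presentation, which is itself a kernel computation of the type being bounded. The paper sidesteps this entirely, because Proposition~\ref{genops} preserves the exactness of $0 \to W_{A'} \to R_{A'}^r \to R_{A'}^m/M_{A'} \to 0$ under base change and so Theorem~\ref{fixedB}(a) bounds $\Ker(R^r \to R^m \surj R^m/M)$ directly. So for part (a) you should either cite that kernel bound (transported to $R$ via the small subalgebra, as the paper does) or note that the generic construction can be arranged so that the bounded resolution begins with the prescribed columns.
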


\begin{remark} Given a map of finitely presented $R$-modules, we may always think of it as 
induced by a map of free modules that map onto these $R$-modules, so that it may be described
as the map $R^r/M' \to R^m/M$ determined by the $m \times r$ matrix of a map of the free numerators.
The kernel of this map is generated by the images of the generators of the kernel of the map to
$R^r \to R^m/M$.  Thus, part (a) of Theorem~E enables one to bound a set of generators for the kernel of  a
map of finitely presented modules when we have information bounding the sizes and degrees of the 
presentations and of the matrix of the map of free modules. \end{remark} 

\begin{remark}  It is difficult to make a comprehensive statement of all the related results that follow
from the main theorems:  the following is an example. In the result below, by the ``leading form" of a polynomial
we mean the nonzero homogeneous component of highest degree, or 0 if the polynomial is 0.  \end{remark}

\begin{corollarye} Let $R = K[\vect x N]$ be a polynomial ring over an algebraically closed field.  There exist
bounds for the number of generators of the ideal generated by the leading forms of the elements in an
ideal generated by $n$ elements of degree at most $d$ that depend on $n$ and $d$ but not on $N$ or $K$.
\end{corollarye}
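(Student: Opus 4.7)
The plan is to use Theorem~B to embed the generators $f_1, \ldots, f_n$ of $I$, together with all their homogeneous components, into a polynomial subring $S$ of $R$ on boundedly many generators, then to transfer the leading-form computation from $R$ into $S$ via flatness, and finally to bound the number of generators inside $S$. Assuming no $f_i$ has a nonzero constant term (else $I = R$ and $I^*$ has a single generator), let $V$ be the graded $K$-subspace of $R$ spanned by all homogeneous components of $f_1, \ldots, f_n$; then $V$ has dimension sequence of length at most $d$ with each entry at most $n$. Theorem~B yields $V \subset S := K[G_1, \ldots, G_B]$ where $G_1, \ldots, G_B$ is a homogeneous regular sequence in $R$ of degree at most $d$ and $B \leq \cB(n,d)$ depends only on $n$ and $d$. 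A regular sequence of forms in the Cohen--Macaulay ring $R$ is algebraically independent, so $S$ is a polynomial $K$-algebra; the Koszul complex on $G_1, \ldots, G_B$ resolves $R/(G_1,\ldots,G_B)R$ over $R$, which gives $\mathrm{Tor}^S_{>0}(R,K) = 0$, and hence $R$ is flat over $S$.

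Set $J := (f_1, \ldots, f_n) S \subset S$, so $I = JR$, and let $J^* \subset S$ denote the leading-form ideal of $J$ with respect to the grading on $S$ inherited from $R$. The main technical step is the identification $I^* = J^* R$: this follows from the flatness of $R$ over $S$ together with the compatibility of the filtrations via the graded inclusion $S \hookrightarrow R$. Concretely, the extended Rees algebra $\bigoplus_k R_{\geq k}\, t^{-k}$ is flat over $\bigoplus_k S_{\geq k}\, t^{-k}$, so the Rees ideal for $I$ is obtained by extension of scalars from that of $J$, and reducing modulo $t^{-1}$ yields $I^* = J^* R$. This identification is the main obstacle in the argument.

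Granting this, it remains to bound $\mu_S(J^*)$. The bounds of Theorem~E(a) are uniform in the number of variables and so apply inside $S$: the first syzygies of both $(f_1, \ldots, f_n)$ and $(\mathrm{in}(f_1), \ldots, \mathrm{in}(f_n))$ in $S$ are generated by sets bounded by functions of $n$ and $d$ alone. A Macaulay-basis iteration in $S$ --- at each step adjoining the leading form of an element of $J$ produced by a syzygy on the current leading forms that fails to lift to a syzygy on the corresponding elements of $J$ --- terminates in boundedly many steps, because it is confined to a polynomial ring on $B \leq \cB(n,d)$ variables and the syzygy bounds at each step are uniform in the number of variables. This gives $\mu_S(J^*) \leq \Phi(n,d)$ for some function $\Phi$, and since $\mu_R(I^*) \leq \mu_R(J^* R) \leq \mu_S(J^*)$, we obtain the desired bound independent of $N$ and $K$.
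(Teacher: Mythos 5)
The first two steps of your argument are essentially sound. The identification $I^* = J^*R$ does hold, but for a reason slightly stronger than the flatness you cite: $R$ is \emph{graded-free} over $S = K[\vect G B]$ (extend $\vect G B$ to a full homogeneous system of parameters, as in the paper's discussion of intersection flatness), and writing an element of $I = JR$ in terms of a homogeneous $S$-basis of $R$ shows directly that its leading form lies in $J^*R$; mere flatness would not suffice, since $R_{\geq k}$ is not the extension of $S_{\geq k}$, so your Rees-algebra assertion really rests on this graded freeness. (Also, the parenthetical ``else $I = R$'' is false --- $1 + x_1$ generates a proper ideal --- but harmless, since the constants lie in $S$ regardless.)

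The genuine gap is in the last step, bounding $\mu_S(J^*)$. Theorem E(a) bounds the generators of the syzygy module arising at each stage of your Macaulay-basis iteration, but that does not bound the \emph{number of stages}. Termination of such an iteration is ordinarily a Noetherianity argument, which yields no uniform bound; a uniform bound on the number of steps (equivalently, a degree bound for a standard basis of $J$) is exactly the kind of statement that requires either classical Hermann--Seidenberg-type bounds in $B$ variables or the generic-flatness stratification underlying Theorem~\ref{fixedB}, and you have not supplied it --- ``the syzygy bounds at each step are uniform'' is not a substitute. A second, smaller issue: the relevant grading on $S$ is the weighted one with $\deg G_i = \deg_R G_i$, not the standard one, so any bound quoted for standard-graded polynomial rings in $B$ variables must be adapted (there are only finitely many weight vectors in $\{1,\ldots,d\}^B$, so this is repairable, but it is not addressed). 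Finally, the paper's own proof sidesteps the small-subalgebra detour entirely: homogenize $\vect f n$ with respect to one new variable $x$, observe that the ideal of leading forms is the image modulo $x$ of the saturation $(\vect F n):_{R[x]} x^\infty$, and apply Theorem E(c), whose bound $\Gamma(1,n,d)$ is already independent of the number of variables. Your route can be made to work, but only by supplying the missing degree bound, at which point it does considerably more work than the two-line argument available from Theorem E.
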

\begin{proof}  Let the ideal be $(\vect f n)R$. Let $\vect F n$ be the result of homogenizing the $f_i$ with
respect to an new variable $x = x_{N+1}$.  Then $\vect F n$ also have degree at most $d$,  and the required ideal
is the image of $(\vect F N):_{R[x]}x^\infty$ mod $x$.  \end{proof}

\begin{theoremf} There is an ascending function $\Phi(h,d)$ such that, independent of the algebraically
closed field $K$ or the integer $N$,  if a form $F$ of degree $d$ in the polynomial ring
$K[\vect x N]$ has strength at least $\Phi(h,d)$,  then $\cD F$
is not contained in an ideal generated by $h$ forms of degree at most $d-1$. \end{theoremf}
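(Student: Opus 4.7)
The plan is to derive Theorem F as an essentially immediate consequence of (the first paragraph of) Theorem A together with Krull's height theorem. The strategy is that the hypothesis $\cD F \subseteq (G_1,\ldots,G_h)R$ with $\deg G_i \leq d-1$ gives an upper bound on the height of $FR + (\cD F)R$, whereas high strength of $F$ forces a lower bound on this height via Theorem A.

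Concretely, I would define
$$\Phi(h,d) := {}^{h}\!A(d),$$
where ${}^{\eta}\!A(d)$ is the function supplied by the first paragraph of Theorem A. Since ${}^{\eta}\!A(d)$ is ascending as a function of $\eta$ and $d$, the function $\Phi(h,d)$ is ascending in both $h$ and $d$ as required.

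Now suppose $F \in K[\vect x N]$ is a form of degree $d$ with strength at least $\Phi(h,d) = {}^{h}\!A(d)$. Applying Theorem A with $\eta = h$, the quotient $R/FR$ satisfies $\rom{R}_h$, and by the Remark immediately following Theorem A this is equivalent to the statement that
$$\height\bigl(FR + (\cD F)R\bigr) \geq h + 2.$$
Suppose, for contradiction, that $\cD F \subseteq (G_1,\ldots,G_h)R$ for some forms $G_1,\ldots,G_h$ of degree at most $d-1$. Then
$$FR + (\cD F)R \subseteq (F,G_1,\ldots,G_h)R,$$
and the right-hand ideal is generated by at most $h+1$ elements, so by Krull's height theorem its height is at most $h+1$. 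This contradicts the lower bound of $h+2$, completing the proof.

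There is no real obstacle in this argument: all the substance has been packaged into the first paragraph of Theorem A, which translates strength into a height bound on the Jacobian-plus-$F$ ideal. Theorem F is, from this vantage point, just the observation that containment in a small ideal is itself a height bound, so the two must be compatible; this is the content of Krull's height theorem.
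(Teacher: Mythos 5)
Your derivation is locally valid---the height comparison via Krull's height theorem and the Remark following Theorem A is fine as far as it goes---but it is circular within the logical structure of the paper. The theorems are proved by a simultaneous induction on the degree $d$, and in \S\ref{proof} the first paragraph of Theorem A in degree $d$ is itself deduced \emph{from} Theorem F in degree $d$ (together with the bound $D(k-1,d-1)$ from the induction hypothesis): assuming $(\cD F)R$ has small height, one extracts a minimal prime with few generators of degree at most $d-1$ containing $\cD F$, and invokes Theorem F to contradict the strength hypothesis. So you cannot take ${}^h\!A(d)$ as given when proving Theorem F in degree $d$. The only situation in which the first paragraph of Theorem A is available independently of Theorem F is when $\ch(K)$ does not divide $d$, and there Theorem F is already trivial from Euler's formula with $\Phi(h,d)=h$; the hard case $p \mid d$ is precisely where your argument begs the question.

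The paper's actual route (Proposition~\ref{phi}) descends in degree rather than appealing to Theorem A: it uses Corollary B in degree $d-1$ to place the $h$ forms of degree at most $d-1$ inside a subring $K[\vect G B]$ with $B \leq {}^3\cB(h,d-1)$ and $\vect G B$ an $\rom{R}_3$-sequence. Then $R/(\vect G B)$ is a UFD by Grothendieck's theorem, and $F$ remains irreducible there, since a factorization would yield a $(B+1)$-collapse of $F$, contradicting the strength hypothesis with $\Phi(h,d) = {}^3\cB(h,d-1)+1$. Hence $\vect G B, F$ is a prime sequence, and the Jacobian criterion forces the row of the Jacobian matrix corresponding to $F$, whose span is $\cD F$, to be nonzero modulo $(\vect G B)$---contradicting $\cD F \subseteq (\vect G B)R$. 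That is the missing idea: replace the $h$ given forms by a well-behaved regular sequence of strictly lower degree so that irreducibility and the Jacobian criterion apply, rather than quoting the not-yet-available $\rom{R}_\eta$ statement for forms of degree $d$.
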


Of course, this is obvious from Euler's formula if $p:=\ch(K)$ does not divide $d$:  in that case we may
take $\Phi(d,h) = h$, since $F$ is in the ideal $(\cD F)R$.  We handle the case where $p$ is a positive
prime that may divide $d$ inductively, by using the fact that we know Corollary B  for integers less
than $d$. See Proposition~\ref{phi}. \medskip

We end this section with a brief overview of the structure of the proof. 
The main results are proved by simultaneous induction.  \S\ref{lemmas}
through Theorem~\ref{combined} establishes lower bounds on the codimension of the singular locus of a variety
defined by a regular sequence of forms needed in the proofs of statements about the existence of
$\rom{R}_\eta$-sequences.  Proposition~\ref{phi}  is independent of other material in \S\ref{lemmas}:  it plays a key role
in the induction.  The rest of \S\ref{lemmas} is concerned with proving results on when prime (respectively, primary)
ideals retain that property after extension.  

In the course of the induction, one can sometimes pass, using
cases of the theorems that are already known, to a polynomial
subring in which one has a bound for the number of variables.  \S\ref{bound} contains results that provide
other relevant bounds once a bound for the number of variables is known.  

In \S\ref{proof}, all the prior results are combined in the simultaneous induction that yields the proofs of all of
the main theorems.

\section{Preliminary results}\label{lemmas} 

The proofs of our main results depend on giving lower bounds for the codimension of the singular locus of the
variety defined by a regular sequence, which means giving a lower bound for the heights of certain ideals generated by maximal  
minors of Jacobian matrices.  Theorem~\ref{codimsing} enables us to reduce to the case where the elements in the regular sequence
have mutually distinct degrees, while Theorem~\ref{MAX} gives a very strong result on heights of  ideals of maximal minors in
the situation where one can assign a degree to every row of the matrix such that all elements of that row have the assigned degree
but distinct rows are assigned distinct degrees.  The desired result on codimension is obtained, using these earlier results,  
in Theorem~\ref{combined}.

Proposition~\ref{phi} is, in a sense, unrelated to other results in this section.  It provides a key step in the complex multiple
induction that simultaneously proves all of our main results in Section~\ref{proof}.

The remaining results in this section are aimed at giving conditions on a faithfully flat extension 
$R \inc S$ so that every prime (and, hence, also, every primary) ideal of $R$ remains prime
(respectively, primary) when extended to $S$.  The case needed for our proofs
is when $S$ is a polynomial ring over an algebraically closed field $K$ and $R$ is generated over $K$ by a prime sequence of forms of positive degree:  this is, essentially,  Corollary~\ref{expand}.  
This is needed in our proof of results bounding primary decomposition, which proceeds by passing from the original ring to a small
subalgebra generated by a homogeneous prime sequence.
 \begin{theorem}\label{codimsing} Let $K$ be an algebraically closed field, let $R =K[\vect xN]$ be a polynomial
 ring.  Let $V$ be a graded $K$-vector subspace of $R$, say
 $V = V_1 \oplus\,\cdots\, \oplus V_d$, where $V_i$ is spanned by forms of degree $i$, and suppose that
 $V$ has finite dimension $n$.  Assume that a homogeneous basis $\vect  F n$ for $V$ is a regular sequence in $R$.  
 Let $X = \cV(\vect F n)$.  
 Let $S$ be the family of all subsets of $V$ consisting of nonzero forms with mutually distinct degrees, so that
 the number of elements in any member of $S$ is at most the number of nonzero $V_i$. 
 For $\sigma \in S$, let $C_\sigma$ be the codimension of the singular locus of $\cV(\sigma)$
 in $\A^N_K$.  
 Then the codimension in $\A^N_K$ of the singular locus of $X$  is at least $(\min_{\sigma \in S} C_\sigma) - (n-1)$.\end{theorem}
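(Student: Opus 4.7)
The plan is to show that for every $P \in \rom{Sing}(X)$ there is some $\sigma \in S$ with $P$ in the singular locus of $\cV(\sigma)$, and then to bound $\dim \rom{Sing}(X)$ by a fiber-dimension argument in which $\sigma$ varies over a parameter space of dimension at most $n-1$.  Since $\vect F n$ is a regular sequence, $X$ has codimension $n$, and $P \in \rom{Sing}(X)$ precisely when the rank of the $n \times N$ Jacobian of $\vect F n$ at $P$ drops below $n$.  Hence there exist scalars $c_1, \ldots, c_n$, not all zero, with $\sum_{j=1}^n c_j\, dF_j|_P = 0$.  Grouping by degree, set $H_i := \sum_{\deg F_j = i} c_j F_j \in V_i$ and $I := \{i : H_i \neq 0\}$; then $\sigma := \{H_i : i \in I\}$ consists of nonzero forms of mutually distinct degrees, so $\sigma \in S$.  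Each $H_i \in V$ vanishes at $P$ (since $P \in X = \cV(V)$), and by construction $\sum_{i \in I} dH_i|_P = 0$, so the Jacobian of $\sigma$ at $P$ has rank at most $|I|-1$.

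The key technical step is to promote this rank drop to membership in the (scheme-theoretic) singular locus of $\cV(\sigma)$; for this I need $\cV(\sigma)$ to have codimension exactly $|I|$, i.e., $\sigma$ must itself be a regular sequence.  Extend $\sigma$ to a graded basis of $V$ by adjoining, for each $i \in I$, a complement to $H_i$ inside $V_i$, together with a basis of $V_j$ for each $j \notin I$.  This extended basis generates the same ideal $(V) = (F_1, \ldots, F_n)$, which has height $n$ by the regular-sequence hypothesis.  Because $R$ is Cohen--Macaulay, a system of $n$ generators for an ideal of height $n$ forms a regular sequence in every order, so I may place $\sigma$ first; the initial segment $\sigma$ is then itself a regular sequence.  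Therefore $\cV(\sigma)$ has codimension $|I|$, and the Jacobian criterion gives $P \in \rom{Sing}(\cV(\sigma))$.  This Cohen--Macaulay permutability argument is the main obstacle: without it the rank drop of the Jacobian of $\sigma$ does not force $P$ into the singular locus.

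For the fiber-dimension step, for each $I \subseteq \{i : V_i \neq 0\}$ set $\Pi_I := \prod_{i \in I}\PP(V_i)$, which parametrizes the members of $S$ whose degree-support is $I$ and satisfies
\[
   \dim \Pi_I \;=\; \sum_{i \in I}(\dim V_i - 1) \;\leq\; n - |I| \;\leq\; n-1.
\]
Consider the incidence variety
\[
   W_I \;:=\; \{\,(\sigma, P) \in \Pi_I \times \A^N_K : P \in \rom{Sing}(\cV(\sigma))\,\}.
\]
The projection $W_I \to \Pi_I$ has fibers of dimension at most $N - \min_{\sigma \in S} C_\sigma$, so $\dim W_I \leq (n-1) + N - \min_{\sigma \in S} C_\sigma$.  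The first two paragraphs yield $\rom{Sing}(X) \subseteq \bigcup_I \pi_2(W_I)$, where $I$ ranges over a finite collection of subsets, so $\dim \rom{Sing}(X) \leq (n-1) + N - \min_{\sigma \in S} C_\sigma$, which is equivalent to the desired codimension bound.
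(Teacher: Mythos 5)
Your proof is correct and follows essentially the same route as the paper's: in both, the key step is that a rank drop of the Jacobian of $\vect F n$ at a point yields a nonzero $F=\sum_j c_jF_j$ all of whose partials vanish there, and the homogeneous components of $F$ of distinct degrees give a $\sigma\in S$ that is singular at that point; this is then combined with a fiber-dimension count over a parameter space of dimension at most $n-1$. You organize the count as an incidence variety over $\prod_{i\in I}\PP(V_i)$, while the paper stratifies the singular locus by Jacobian rank and maps each stratum to $\PP^r$ via the relation vector---an equivalent bookkeeping---and your Cohen--Macaulay argument that each $\sigma$ is a regular sequence is sound (the paper implicitly sidesteps this by reading ``singular locus of $\cV(\sigma)$'' as the vanishing locus of $\sigma$ together with the maximal minors of its Jacobian).
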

 \begin{proof} We study the codimension of the set where the Jacobian of $X$ has rank at most $n-1$. Let
 $Z$ denote an irreducible component of the singular locus of $X$.  
 We first consider the case where the Jacobian has rank 0 on $Z$, i.e., where
 it vanishes identically.  Let $\lambda_0$ be the set of all $i$ such that $V_i \not=0$.  If we form $\sigma$ by
 choosing one form 
 $G_i$ of each degree $i \in \lambda_0$,  then
 $Z$ is in the singular locus of the scheme $Y = \cV(G_i: i \in \lambda_0)$ defined by the vanishing of these $G_i$ (evidently, the Jacobian
 of this smaller set of polynomials is still identically 0 on $Z$), which shows that the dimension of the singular
 locus of $Y$ is at least as large as the dimension of $Z$, and hence $C_\sigma$ is a lower bound for the
 codimension of $Z$. 
  
 We second consider an irreducible component $Z$ of the singular locus, such that on a nonempty
open subset $U_1$ of $Z$,  the Jacobian matrix has rank $r $, $1 \leq r \leq n-1$.  We can choose an $r \times r$ minor 
$\mu$ of the Jacobian matrix that does not vanish on a dense open subset $U$ of $U_1$, and it will suffice to bound below the codimension of $U$ in $\A^N_K$.  Choose forms which, after renumbering, we may assume are $\vect F {r+1}$ in the basis for $V$ such that the
corresponding  $r+1$ rows of the Jacobian
matrix contain the $r$ rows corresponding to $\mu$. We have a map $\theta:U \to \PP^r$ that assigns to each 
point $u \in U$ the non-trivial relation
on the rows of the Jacobian matrix $J_0$ of $\vect F {r+1}$ when it is evaluated at $u$:  since the $J_0$ has rank
exactly $r$ at $u$,  this relation is unique up to multiplication by a nonzero scalar.  In fact, it is given by the $r \times r$
minors of the $r$ columns determined by the nonvanishing minor $\mu$.  Since the dimension of $\theta(U) \inc \PP^r$ 
is at most $r$,  the dimension of $U$ is bounded by the sum of $r$ and the dimension of a typical fiber $Y$
of the  map.  Note that $r \leq n-1$, and  the codimension of $U$ in $\A^N_K$ is bounded below by $C-r$,  where $C$
is the codimension of a typical fiber of the map $\theta:U \to \PP^r$.  Consider the fiber over the point 
$u = [a_1: \cdots : a_{r+1}]
\in \PP^r$.  Because the $a_i$ give a relation on the rows of the Jacobian matrix corresponding to $\vect F {r+1}$,
it follows that all of the partial derivatives of $F = \sum_{i=1}^{r+1} a_i F_i$ vanish on $U$.  We can break this
sum up as a sum of nonzero forms of mutually distinct degrees, say $F = G_{i_1} + \cdots + G_{i_h}$ where
$1 \leq i_1 < \cdots < i_h \leq d$ are the degrees.  But then the sum of the rows of the Jacobian matrix for 
 $Z_0 = \cV(G_{i_1}, \ldots, G_{i_h})$  vanishes on $U$,  and so $U$ is contained in the singular locus of $Z_0$.
 The codimension in $\A^N_K$ of the singular locus of $Z_0$ is bounded below by $C_\sigma$ with 
 $\sigma =\{G_{i_1}, \, \ldots,\,  G_{i_h}\}$.  Thus the codimension of $U$ in $\A^N_K$ is bounded below by 
 $C_\sigma - r$,
 where $r \leq n-1$. This yields the stated result. \end{proof}

\begin{remark}\label{Serreht} Note that in a polynomial ring, the height
of a homogeneous ideal $I$ does not increase when we kill some of the variables.  
Let $P$ be a minimal prime of $I$ whose height is the same as that of $I$, and let
$Q$ be the prime generated by the variables we are killing. The result holds because 
we may localize at a minimal prime of $P+Q$, and we may apply the result of 
\cite{Serre}, Th\'eor\`eme 1, part (2), p.~V--13, which implies
that $\height(P + Q) \leq \height(P) + \height(Q)$. We shall make use of this in the proof
of Theorem~\ref{MAX} below.  \end{remark}

\begin{remark}\label{distinct} In the theorem just below, the hypothesis that the degrees associated
with the various rows be distinct is crucial:  without it, the rows could all be taken to be the same.
Having the degrees be all different somehow makes the matrix more like a generic matrix, i.e., a 
matrix of indeterminates, for which results like the one below have long been known:  cf.~\cite{EN}, \cite{HE}.
 \end{remark}

\begin{theorem}\label{MAX} Let $K$ be a field, let $R$ be a polynomial ring over $K$, and let
$M$ be an $h \times N$ matrix such that for $1 \leq i \leq h$,  the $i\,$th row consists of forms
of degree $d_i \geq 0$ and the $d_i$ are mutually distinct integers.  Suppose that for $1 \leq i \leq h$,
the height of the ideal generated by the entries of the $i\,$th row is at least $b$. (If the row consists
of scalars, this is to be interpreted as requiring that it be nonzero.)  Then the ideal
generated by the maximal minors of the matrix has height at least $b - h + 1$.  \end{theorem}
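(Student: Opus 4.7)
The plan is to proceed by induction on the number of rows $h$, with the base case $h=1$ being immediate, since the maximal minors are just the row entries, which by hypothesis generate an ideal of height at least $b = b - 1 + 1$.

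For the inductive step with $h \geq 2$, I would fix a minimal prime $P$ of $I := I_h(M)$ and aim to prove $\height(P) \geq b - h + 1$. First I would look at the $(h-1) \times N$ submatrix $M'$ obtained by deleting the $h$-th row (WLOG after relabeling). Its row degrees remain mutually distinct and its rows still generate ideals of height at least $b$, so by the inductive hypothesis $\height(I_{h-1}(M')) \geq b - h + 2$. If $I_{h-1}(M') \subseteq P$ we are done, so I may assume there is some $(h-1)\times(h-1)$ minor $\mu$ of $M'$ with $\mu \notin P$, say the minor on columns $\{1, \ldots, h-1\}$ after permuting columns.

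The key algebraic input comes from cofactor expansion. For each column $k$, the $h \times h$ minor of $M$ on columns $\{1, \ldots, h-1, k\}$ expands along the last row as $\pm \mu \cdot m_{h,k} + \sum_{j=1}^{h-1} \pm \mu_j^{(k)} \cdot m_{h,j}$, where the $\mu_j^{(k)}$ are $(h-1) \times (h-1)$ minors of $M'$. Since the minor on the left lies in $I \subseteq P$, reducing modulo $P$ in the integral domain $R/P$ gives $\bar\mu \cdot \bar m_{h,k} \in (\bar m_{h,1}, \ldots, \bar m_{h,h-1})$ for every $k$, and therefore
\[
\mu \cdot \fH_h \;\subseteq\; (m_{h,1}, \ldots, m_{h,h-1}) + P \quad \text{in } R,
\]
where $\fH_h$ denotes the ideal generated by row $h$. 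To conclude, I would locate a minimal prime $Q$ of $(m_{h,1}, \ldots, m_{h,h-1}) + P$ with $\mu \notin Q$: the displayed inclusion together with $\mu \notin Q$ forces $\fH_h \subseteq Q$, hence $\height(Q) \geq b$; on the other hand, $Q/P$ is a minimal prime of an ideal generated by $h - 1$ elements in the catenary ring $R/P$, so Krull's principal ideal theorem bounds $\height_{R/P}(Q/P) \leq h - 1$, giving $\height(Q) \leq \height(P) + h - 1$. Combining the two estimates yields $\height(P) \geq b - h + 1$.

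The main obstacle I foresee is ensuring the existence of such a minimal prime $Q$ not containing $\mu$. The degenerate possibility in which every minimal prime of $(m_{h,1},\ldots,m_{h,h-1}) + P$ contains $\mu$ --- equivalently, $\bar\mu^t \in (\bar m_{h,1}, \ldots, \bar m_{h,h-1})$ in $R/P$ for some $t$ --- must be ruled out or handled separately. This is where the distinct-degree hypothesis is essential: using $\deg \bar\mu = d_1 + \cdots + d_{h-1}$ and $\deg \bar m_{h,j} = d_h$, the graded equation $\bar\mu^t = \sum c_j \bar m_{h,j}$ imposes rigid degree constraints in the graded domain $R/P$. Varying the pivot minor $\mu$ across different column subsets, or exchanging the role of the distinguished row among $1, \ldots, h$, and combining the resulting inclusions should either produce a nondegenerate pivot for which the argument above applies, or else force an independent height estimate on $P$ sufficient to close the induction.
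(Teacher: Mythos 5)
Your skeleton --- induction on $h$, Laplace expansion of the $h\times h$ minors along the last row to get $\mu\cdot \fH_h \inc (m_{h,1},\dots,m_{h,h-1})+P$, then a Krull-height count on a minimal prime $Q$ of $(m_{h,1},\dots,m_{h,h-1})+P$ avoiding $\mu$ --- is sound as far as it goes, but the ``degenerate possibility'' you flag at the end is a genuine gap, and it is exactly where the entire content of the theorem sits. If $\bar\mu \in \sqrt{(\bar m_{h,1},\dots,\bar m_{h,h-1})}$ in $R/P$, no such $Q$ exists and your argument produces nothing. This case really occurs and really is fatal in the absence of the distinct-degree hypothesis: take $h=2$ and two \emph{equal} rows, so $I_2(M)=0$ and $P=(0)$ is a minimal prime of height $0$; then $\mu=m_{1,1}$ and $(m_{2,1})+P=(m_{1,1})=(\mu)$, so every minimal prime of it contains $\mu$. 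Your proposed rescue via degree rigidity does not work: the graded equation $\bar\mu^{\,t}=\sum_j c_j\bar m_{h,j}$ is perfectly consistent with the grading (take $c_j$ homogeneous of degree $t(d_1+\cdots+d_{h-1})-d_h$ once $t$ is large), so no contradiction falls out of comparing degrees, and ``varying the pivot or the distinguished row'' is not an argument. Until the degenerate case is handled by an actual mechanism that uses the distinctness of the $d_i$, the proof is incomplete.

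For comparison, the paper's proof avoids this trap by a different device. It first reduces (by killing generic linear forms) to the case where $R$ has exactly $b$ variables and each non-scalar row generates an ideal primary to the homogeneous maximal ideal, and orders $d_1<\cdots<d_h$. Given a minimal prime $P$ of the maximal minors, either some proper subset of the rows becomes linearly dependent over $\fra(R/P)$ --- in which case induction on $h$ applied to that subset already bounds $\di(R/P)$ --- or else the top-degree row $\rho_h$ is a unique combination $\rho_h=\sum_{i<h}u_i\rho_i$ with $u_i\in\fra(R/P)$ homogeneous of degree $d_h-d_i>0$. Adjoining the $u_i$ gives an $\N$-graded ring $S$ with $S_0=K$ and $\di S=\di(R/P)$, and $S/(u_1,\dots,u_{h-1})S$ is zero-dimensional because the vanishing of the $u_i$ forces the vanishing of $\rho_h$, whose entries are $\fm$-primary; hence $\di(R/P)\le h-1$. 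The strict positivity of the degrees $d_h-d_i$ --- i.e., the distinctness of the $d_i$ --- is what puts the $u_i$ in the irrelevant maximal ideal of $S$ and makes the dimension count close. That is the ingredient your argument is missing.
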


\begin{proof} Without loss of generality, we may enlarge the field to be algebraically closed, 
and we may assume that $d_1 < \, \ldots \, < d_h$.  
We use induction on $h$:  the case where $h = 1$ is immediate. (If one has a single nonzero
row of scalars, the height of the ideal generated by the maximal minors is $+\infty$.)  We therefore assume $h \geq 2$
and that the result holds for smaller $h$.  Next, we reduce to the case where the number of variables
in $R$ is $b$, and every non-scalar row generates an ideal primary to the homogeneous maximal ideal.  Suppose 
that the number of variables
is greater than $b$.  For each $i$, choose a subset of the span of the $i\,$th row generating an ideal $J_i$
of height $b$. Choose a linear form that is not in any of the minimal primes of any of the $J_i$.  
We may kill this form, and the hypotheses are preserved:  the height of the ideal generated by the maximal
minors does not increase by Remark~\ref{Serreht}.  We may continue in this way until the
number of variables is $b$.  

Let $P$ be a minimal prime ideal of the ideal generated by the maximal minors of $M$.
To complete the proof, it will suffice to show that the dimension of the ring $R/P$ is at most $h-1$.

Let $\ov{M}$ denote the image of the matrix $M$ over $R/P$. 
It is possible that all of the maximal minors of the matrix formed by a proper subset consisting
of $h_0 < h$  of the rows
of $\ov{M}$ vanish in $R/P$.  But then the height of the ideal generated by the maximal minors of these rows is at least  
$b-h_0 + 1$ by the induction hypothesis, and this shows that the dimension of $R/P$ is at most $h_0-1$.
Hence, we may assume that there is 
no linear dependence relation on any proper subset of the rows of $\ov{M}$, while the rank of the image
$\ov{M}$ is $h-1$.  This implies that there are unique elements of the fraction field of $R/P$, call them
$u_1, \ldots, u_{h-1}$, such that $\rho_h = \sum_{i=1}^{h-1} u_i \rho_i$, where $\rho_i$ is the image
of the $i\,$th row of $M$.  More specifically, since the first $h-1$ rows of $\ov{M}$ are linearly independent
over $\fra(R/P)$, we may choose $h-1$ columns forming an $h \times (h-1)$ submatrix $\ov{M}_0$ of $\ov{M}$ 
such that the $h-1$ size minor $\Delta$ of the first $h-1$ rows is not 0.
The nonzero relation, unique up to multiplication by a nonzero scalar in $\fra(R/P)$,  
on the rows of the submatrix $\ov{M}_0$ is given by the vector whose entries are its 
$h-1$ size minors, which
are homogeneous elements of $R/P$. This must give the relation on the rows of $\ov{M}$. 
 Thus, every $u_j$ can be written as a fraction with denominator $\Delta$
whose numerator is one of the other minors of $\ov{M}_0$. 

Let $S$ be the ring
$(R/P)[\vect u {h-1}]$.  Note that $u_i$ has degree  $d_h - d_i > 0$, so that $S$ is a finitely generated 
$\N$-graded $K$-algebra  with $S_0 = K$  
generated over $K$ by the images of the $x_i$ and by the $u_i$.  The Krull dimension
of $S$ is the same as that of $R/P$, since the fraction field has not changed, and that is the same as the
height of the maximal ideal of $S$.  But $S/(\vect u {h-1})S$ is zero-dimensional, since the vanishing of the
$u_i$ implies the vanishing of all entries of $\rho_h$,  and these entries generate an ideal primary to the homogeneous
maximal ideal of $K[\vect x b]$.  It follows that the Krull dimension of $S$ is at most $h-1$, 
and, hence, the same holds for $R/P$, as required. \end{proof}

\begin{theorem}\label{combined} Let  $K$ be an algebraically closed field, and let $V$ be an $n$-dimensional 
graded $K$-vector subspace of the polynomial ring $R = K[\vect x N]$ consisting of forms of  degree between $1$ and 
 $d$, so that $V = V_1 \oplus \cdots \oplus V_d$. Assume that a basis for $V$ consisting of forms is a regular
 sequence in $R$.   Let $h$ denote the number of integers $i$ such that $V_i \not=0$, so that
 $h \leq \min\,\{d,\,n\}$.  Suppose that for every nonzero homogeneous element $F$ of $V$, the
 height of the ideal $(\cD F)R$ in $R$ is at least $\eta + h+2n - 1$.  Then the codimension of the singular locus
 of  $R/(V)R$  in  $R/(V)R$  is at least  $\eta + 1$.
 
\end{theorem}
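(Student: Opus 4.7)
The plan is to combine Theorems~\ref{codimsing} and~\ref{MAX}. Since $R/(V)R$ has codimension $n$ in $R$, the singular locus of $R/(V)R$ has codimension at least $\eta+1$ in $R/(V)R$ if and only if the codimension in $\A^N_K$ of the singular locus of $X=\cV(V)$ is at least $\eta+n+1$.

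First I would apply Theorem~\ref{codimsing}, which bounds this codimension below by $(\min_{\sigma\in S}C_\sigma)-(n-1)$, where $S$ is the family of subsets of $V$ consisting of nonzero forms of mutually distinct degrees. Each such $\sigma$ has at most $h$ elements, by the definition of $h$. It therefore suffices to prove
\[
C_\sigma \ \geq\ \eta + 2n \qquad \text{for every } \sigma \in S.
\]

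Fix $\sigma=\{G_{i_1},\ldots,G_{i_k}\}\in S$ with $k\leq h$ and distinct degrees $i_1<\cdots<i_k$. The singular locus of $\cV(\sigma)$ is cut out by $(\sigma)R+I_k(J_\sigma)$, where $J_\sigma$ is the $k\times N$ Jacobian matrix of $\sigma$; hence $C_\sigma\geq\height\bigl(I_k(J_\sigma)\bigr)$. The $j$th row of $J_\sigma$ consists of the partials of $G_{i_j}$, which are forms of degree $i_j-1$, and these row degrees are mutually distinct. Moreover, the entries of this row generate $(\cD G_{i_j})R$, which by hypothesis has height at least $\eta+h+2n-1$. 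Applying Theorem~\ref{MAX} (whose ``$h$'' is here our $k$) yields
\[
\height\bigl(I_k(J_\sigma)\bigr)\ \geq\ (\eta+h+2n-1)-k+1\ =\ \eta+h+2n-k\ \geq\ \eta+2n,
\]
using $k\leq h$ at the last step. Combined with Theorem~\ref{codimsing} this gives codimension at least $(\eta+2n)-(n-1)=\eta+n+1$ for the singular locus of $X$ in $\A^N_K$, as required.

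The main substance is already packaged into the two previous results: Theorem~\ref{codimsing} pays a ``$-(n-1)$'' to handle components along which the Jacobian of the full basis could have positive but deficient rank, while Theorem~\ref{MAX} pays a ``$-(k-1)$'' to convert distinct-degree row-wise height bounds into a height bound on maximal minors; the hypothesis $\eta+h+2n-1$ is engineered precisely to absorb both losses. The only edge case needing care is when some $G_{i_j}$ is linear, so that $(\cD G_{i_j})R=R$ and the corresponding row of $J_\sigma$ is a nonzero row of scalars; this is precisely the situation handled by the ``nonzero scalar row'' convention in the statement of Theorem~\ref{MAX}.
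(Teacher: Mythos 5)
Your proposal is correct and follows essentially the same route as the paper: apply Theorem~\ref{MAX} to the Jacobian of each distinct-degree subset $\sigma$ (using $b=\eta+h+2n-1$ and at most $h$ rows) to get $C_\sigma\geq\eta+2n$, feed this into Theorem~\ref{codimsing} to get codimension $\eta+n+1$ in $\A^N_K$, and then lose at most $n$ upon passing to $R/(V)R$. Your explicit treatment of the linear-form/scalar-row edge case and the bookkeeping $k\leq h$ are points the paper leaves implicit, but the argument is the same.
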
 
\begin{proof}  Consider any set $\sigma$ of homogeneous elements of $V$ of distinct degrees:
it has at most  $h$ elements.  The Jacobian matrix of the elements of $\sigma$ has at most
$h$ rows, and the degrees associated with the rows are distinct.  By hypothesis, each row
generates an ideal of  height $\eta + h+2n-1$ in $R$.  By Theorem~\ref{MAX},  the height of the
ideal of maximal minors is at least $\eta + 2n-1 + 1$.  Hence, the codimension $C_\sigma$
of the singular locus of $V(\sigma)$ in $\A^N_K$ is at least $\eta+2n$.  By 
Theorem~\ref{codimsing},  the codimension of the singular locus of $R/(V)R$ in  $\A^N_K$ is at least
$\eta + n +1$.  When we work mod $(V)R$ this codimension can drop, at worst, to $\eta +1$.  
\end{proof}

The following result shows that Corollary B in degree $d-1$ implies Theorem F in degree $d$. 
\begin{proposition}\label{phi} Suppose that we have a function $^3\cB(n,d-1)$ for a fixed value of $d$ and all $n$, as in the statement 
of Corollary B.  Then Theorem F holds
with $\Phi(h,d) =  {}^3\cB(h, d-1)+1$. \end{proposition}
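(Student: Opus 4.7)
The plan is to prove the contrapositive: assuming $\cD F \subseteq (H_1, \ldots, H_h)R$ with each $H_i$ a form of degree at most $d-1$, we will exhibit $F$ in an ideal generated by at most $\Phi(h,d) = {}^3\cB(h,d-1)+1$ forms of strictly smaller positive degree. First, apply Corollary B in degree $d-1$ to the graded $K$-span of $H_1, \ldots, H_h$, which has dimension at most $h$ and degrees bounded by $d-1$: this produces forms $G_1, \ldots, G_B$ of degree $\leq d-1$ with $B \leq {}^3\cB(h,d-1)$, forming an $\rom{R}_3$-sequence in $R$, such that every $H_i$ lies in $K[G_1, \ldots, G_B]$. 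Hence $\cD F \subseteq (G_1, \ldots, G_B)R$. When $\ch(K)\nmid d$, Euler's identity immediately gives $F \in (\cD F)R \subseteq (G_1, \ldots, G_B)R$, so $F$ has a $B$-collapse, well within the required bound. The substantive case is $p := \ch(K)\mid d$.

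In that case, set $S := R/(G_1, \ldots, G_B)R$. Because $G_1, \ldots, G_B$ is an $\rom{R}_\eta$-sequence of forms in a polynomial ring with $\eta \geq 1$, $S$ is a Cohen-Macaulay normal integral domain. The hypothesis that each partial derivative of $F$ lies in $(G_1, \ldots, G_B)R$ translates to the vanishing of the K\"ahler differential $d\bar F = \sum_i \overline{\partial F/\partial x_i}\, dx_i$ in $\Omega_{S/K}$, hence also in $\Omega_{L/K}$ where $L$ is the fraction field of $S$. The decisive characteristic $p$ input is the standard fact that for any finitely generated extension $L$ of a perfect field $K$ (and here $K$ is algebraically closed, hence perfect), the kernel of $d\colon L \to \Omega_{L/K}$ is exactly $L^p$. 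Thus $\bar F = g^p$ for some $g \in L$. Since $g$ satisfies the monic equation $T^p - \bar F = 0$ over the normal ring $S$, in fact $g \in S$; and since $\bar F$ is homogeneous and the Frobenius is additive in characteristic $p$, the graded decomposition of $g$ forces $g$ to be homogeneous of degree $d/p$. Lifting $g$ to a form $G \in R$ of degree $d/p$ then gives $F - G^p \in (G_1, \ldots, G_B)R$, so $F$ lies in the ideal generated by the $B+1 \leq \Phi(h,d)$ forms $G, G_1, \ldots, G_B$, each of degree strictly less than $d$ (note $d/p < d$ because $p \geq 2$). This is the required collapse.

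The main obstacle is the case $p \mid d$: Euler's identity is useless, and one must convert the ideal-theoretic containment $\cD F \subseteq (G_1, \ldots, G_B)R$ into structural information about $F$ itself. The bridge is the K\"ahler differential module combined with the characteristic $p$ identity $\ker(d_{L/K}) = L^p$ for finitely generated extensions of perfect fields; normality of $S$, guaranteed by the $\rom{R}_\eta$-sequence hypothesis with $\eta \geq 1$, is what permits the descent of the $p$-th root from $L$ back to $S$ and hence to $R$. Once $G$ is in hand, exhibiting the collapse of $F$ is immediate.
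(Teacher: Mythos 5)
Your proof is correct, and while it begins exactly as the paper does --- applying Corollary B in degree $d-1$ to place the $h$ forms inside $K[\vect G B]$ with $B \leq {}^3\cB(h,d-1)$, so that $\cD F \subseteq (\vect G B)R$ --- the way you derive the collapse from this containment is genuinely different. The paper argues by contradiction: since $R/(\vect G B)R$ is a complete intersection satisfying $\rom{R}_3$, it is a UFD (Grothendieck/Samuel), so the strength hypothesis forces $F$ to be irreducible, hence prime, in that quotient; then $\vect G B, F$ is a prime sequence, and generic smoothness of a domain over a perfect field (the Jacobian criterion) forbids the row $\cD F$ of the Jacobian matrix from vanishing modulo $(\vect G B)R$. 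You instead prove the contrapositive constructively: in the only hard case $p \mid d$, the vanishing of $d\bar F$ in $\Omega_{L/K}$ together with $\ker(d_{L/K}) = K(L^p) = L^p$ (as $K$ is perfect) and normality of the complete intersection yields $\bar F = \bar G^{\,p}$ for a form $G$ of degree $d/p$, whence $F \in (G, \vect G B)R$ is an explicit $(B+1)$-collapse. Your route buys two things: it exhibits the collapse explicitly (identifying $F$ as a $p$-th power modulo $(\vect G B)R$ when its differential dies), and it needs only normality of $R/(\vect G B)R$, i.e.\ $\rom{R}_1$ rather than the $\rom{R}_3$/UFD input --- so in principle it would prove the proposition with ${}^1\cB(h,d-1)+1$ in place of ${}^3\cB(h,d-1)+1$. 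The paper's route is shorter once the UFD fact is granted and avoids any discussion of K\"ahler differentials. All the small points in your argument check out: the degenerate case $\bar F = 0$ gives a $B$-collapse outright, Frobenius injectivity plus the grading forces $g$ to be homogeneous of degree $d/p \geq 1$, and $d/p < d$ since $p \geq 2$, so every generator of the collapsing ideal has strictly smaller positive degree.
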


\begin{proof}   Suppose that a form $F$ of degree $d$ in $K[\vect x N]$ has strength at least $^3\cB(h,d-1)+1$
but that  $\cD F$ is contained in the ideal generated by $h$ forms of degree $d-1$ or less.  By Corollary B
these forms are contained in a subring $K[\vect G B]$  where $B \leq {}^3\cB(h,d-1)$ and
$\vect G B$ form an  $\rom{R}_3$-sequence.  Then $\cD F$ is also contained in the ideal generated
by $\vect G B$.  Since $R/(\vect G B)$ is a complete intersection that is $\rom{R}_3$,  it is a UFD, by Discussion~\ref{groth}.
$F$ must be irreducible in this quotient, or else we obtain a homogeneous equation
$F = F_1 F_2 + \sum_{i=1}^B G_i H_i$.  Thus, $F$ has a $(B+1)$-collapse, contradicting
the hypothesis.  Therefore,  $\vect G B, \, F$ is a prime sequence.  This implies that the maximal
minors of the Jacobian matrix of $\vect G B, \, F$ generate an ideal of positive height mod $(\vect G B,\, F)R$.  Hence the row of
the Jacobian matrix corresponding to $F$, whose $K$-span is $\cD F$,  cannot be 0 mod $(\vect G B)$. \end{proof}

\subsection*{Extension of prime ideals} 
Recall that a flat ring homomorphism $R \to S$ is {\it intersection flat} if for every family $\cI$ of ideals
of $R$,  $\bigcap_{I \in \cI} (IS) = (\bigcap_{I \in \cI} I)S$. Flatness implies this condition when $\cI$ is 
a finite family.  By \cite{HH}, p.~41, if $S$ is free over $R$, then $S$ is intersection flat.  
In the situation where $\vect G B$ is part of a homogeneous system of parameters for the polynomial
ring $K[\vect x N]$,  if $\vect G N$ is a full homogeneous system of parameters we know that
we have free extensions $K[\vect G B] \to K[\vect G N]$ and $K[\vect G N] \to K[\vect x N]$ (this is module-finite
and free, since the target ring is Cohen-Macaulay). Moreover, if $K \inc L$ is a field extension,
$K[\vect x N] \to L[\vect x N]$ is free, since $L$ is free over $K$. Hence, $K[\vect G B] \to L[\vect x N]$ is
free and, consequently, intersection flat.

Recall also that $R$ is a {\it Hilbert ring} if every prime ideal is an intersection of maximal ideals.
 
We first observe the following:
\begin{theorem}\label{primeHilb} Let $R$ be a Noetherian Hilbert ring, and let $S \supseteq R$ be a Noetherian $R$-algebra that is intersection flat over $R$. 
Suppose that for every maximal ideal $m$ of $R$,  $S/mS$ is a domain.  Then for every prime ideal
$P$ of $R$,  $S/PS$ is a domain.  \end{theorem}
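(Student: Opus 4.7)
The plan is to exploit the Hilbert ring hypothesis to reduce the domain question for $S/PS$ to the given hypothesis at maximal ideals, using intersection flatness as the bridge.

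First, since $R$ is a Hilbert ring, I would write $P = \bigcap_{m \in \cM} m$, where $\cM$ is the (possibly infinite) set of all maximal ideals of $R$ containing $P$. Intersection flatness then gives $PS = \bigcap_{m \in \cM} mS$. As a preliminary sanity check, $PS$ is proper: choosing any $m \in \cM$, $PS \inc mS$, and $mS \ne S$ since $S/mS$ is (by assumption) a nonzero domain.

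Next, to verify that $S/PS$ is a domain, I would take $a,\, b \in S$ with $ab \in PS$ and show that $a \in PS$ or $b \in PS$. For each $m \in \cM$, $ab \in PS \inc mS$, and since $S/mS$ is a domain, we can sort $m$ into one (or both) of two classes:
\[
\cM_a := \{m \in \cM : a \in mS\}, \qquad \cM_b := \{m \in \cM : b \in mS\},
\]
so that $\cM = \cM_a \cup \cM_b$. Set $I_a := \bigcap_{m \in \cM_a} m$ and $I_b := \bigcap_{m \in \cM_b} m$ in $R$. By intersection flatness applied to these two families, $a \in \bigcap_{m \in \cM_a} mS = I_a S$ and likewise $b \in I_b S$.

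Finally, I would combine the two pieces. Because $\cM_a \cup \cM_b = \cM$,
\[
I_a \cap I_b \;=\; \bigcap_{m \in \cM_a \cup \cM_b} m \;=\; \bigcap_{m \in \cM} m \;=\; P.
\]
Thus $I_a I_b \inc I_a \cap I_b = P$, and primeness of $P$ forces $I_a \inc P$ or $I_b \inc P$. In the first case $a \in I_a S \inc PS$; in the second, $b \in PS$. This completes the argument. I do not anticipate a serious obstacle: the only subtle point is that intersection flatness must be invoked for the (generally infinite) families $\{m : m \in \cM_a\}$ and $\{m : m \in \cM_b\}$, which is exactly the strength of the hypothesis beyond ordinary flatness, so it is important not to inadvertently assume finiteness of $\cM$.
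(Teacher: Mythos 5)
Your argument is correct, and it takes a genuinely different route from the paper's. The paper proves the theorem by induction on $\dim(R/P)$: for $FG \in PS$ it writes $P$ as the intersection of the (necessarily infinite) family of primes $Q \supseteq P$ with $Q/P$ of height one, applies the inductive hypothesis to each $S/QS$, uses a pigeonhole argument to find one of $F$, $G$ lying in $QS$ for infinitely many $Q$, and then invokes intersection flatness together with the fact that a nonzero element of the domain $R/P$ has only finitely many minimal primes. Your proof skips the induction entirely: you go straight to the maximal ideals containing $P$, split them into the classes $\cM_a$ and $\cM_b$, apply intersection flatness to each class separately, and finish with the elementary observation that $I_aI_b \subseteq I_a \cap I_b = P$ forces $I_a \subseteq P$ or $I_b \subseteq P$. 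What your version buys is economy and generality: it uses only the stated hypotheses (Hilbert ring, intersection flatness, the domain condition at maximal ideals) and in particular makes no use of Noetherianness, finiteness of minimal primes, or finite dimensionality of $R/P$, all of which are implicitly needed for the paper's induction and its ``infinitely many minimal primes'' step. What the paper's version buys is a template that descends one dimension at a time, which can be convenient if one only knows the domain property for $S/QS$ at primes near $P$ rather than at maximal ideals; but for the theorem as stated your direct argument is cleaner. The only points worth making explicit, which you essentially do, are that $\cM \neq \emptyset$ (so $PS$ is proper) and that an empty class, say $\cM_a = \emptyset$, gives $I_a = R \not\subseteq P$, so the dichotomy still lands on $I_b \subseteq P$.
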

\begin{proof} Suppose the theorem is false and that $P$ is maximal among the primes in $R$ that give a counterexample.  
The case where $R/P$ has dimension 0 is the hypothesis.
Now assume that $\di(R/P) = d > 0$. Let $F, G \in S$ be such that  $F,\, G \notin PS$ but $FG \in PS$. By the induction hypothesis,
for every prime $Q \supset P$ of $R$ such that the height of $Q/P$ is one in $R/P$, $S/QS$ is a domain.   Hence,
$F \in QS$ or $G \in QS$.  Since $R$ is a Hilbert ring,  $P$ is an intersection of maximal ideals  $m$,
all of which contain such a $Q$. Hence, $P$ is the intersection of all such $Q$, and the family
of such $Q$ is infinite.  Thus, either $F$ or $G$, say $F$, is in $Q_iS$ for infinitely many
choices $\vect Q i\, \ldots$  of the prime $Q$.  Hence, $F \in \bigcap_{i=1}^\infty Q_iS = (\bigcap_{i=1}^\infty Q_i)S$,
because $R \to S$ is intersection flat.  But $\bigcap_{i=1}^\infty Q_i = P$,  since $f \notin P$ cannot have the
property that $f+P$ has infinitely many minimal primes in $R/P$.  Hence, $F \in PS$, a contradiction.
\end{proof}

Second, we observe:
\begin{proposition}\label{grprime}  Let $R$ be an $\N$-graded domain and let $\vect  F n$ be a regular sequence 
of forms that generate a prime ideal $P$.  Let $\vect f n$ be elements of $R$ whose leading forms are the elements
$\vect F n$.  Then $\vect f n$ generate a prime ideal $Q$. \end{proposition}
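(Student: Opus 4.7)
Let $Q = (\vect f n)R$. For nonzero $r \in R$ write $r = r_e + r_{e-1} + \cdots$ with $r_e \neq 0$ the leading (top-degree) form, denoted $\rom{in}(r)$, and let $\rom{in}(Q)$ be the homogeneous ideal of $R$ generated by the leading forms of the nonzero elements of $Q$. The heart of the matter is to prove $\rom{in}(Q) = P$. Granted this, primality of $Q$ follows by a standard minimum-degree argument: if some pair $g, h \notin Q$ has $gh \in Q$, choose one with $\deg g + \deg h$ minimum; since $R$ is a domain, $\rom{in}(g)\,\rom{in}(h) = \rom{in}(gh) \in \rom{in}(Q) = P$, and primality of $P$ gives, say, $\rom{in}(g) \in \rom{in}(Q)$. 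Choosing $q \in Q$ with $\rom{in}(q) = \rom{in}(g)$ (possible because $\rom{in}(Q)$ is generated in each degree by leading forms of the appropriate degree) and replacing $g$ by $g - q$ strictly decreases $\deg g$ while preserving the counterexample, a contradiction.

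To establish $\rom{in}(Q) = P$, the inclusion $P \subseteq \rom{in}(Q)$ is immediate since $F_i = \rom{in}(f_i)$. For the reverse, fix $q \in Q$ and write $q = \sum_{i=1}^n a_i f_i$. Let $d_i = \deg F_i$, let $A_i$ be the leading form of $a_i$, set $D = \max_i (\deg A_i + d_i)$, and let $S = \{i : \deg A_i + d_i = D\}$. Expanding $a_i f_i = A_i F_i + A_i(f_i - F_i) + (a_i - A_i) f_i$ and comparing degrees shows that the degree-$D$ component of $q$ equals $\sum_{i \in S} A_i F_i$, while all higher degrees vanish. If this sum is nonzero it is precisely $\rom{in}(q)$, and hence lies in $P$.

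The main obstacle is the case $\sum_{i \in S} A_i F_i = 0$. Then $(A_i)_{i \in S}$ is a homogeneous syzygy on the $F_i$ with $i \in S$; invoking permutability of a regular sequence of positive-degree forms in a graded Noetherian domain, we may assume $S$ is an initial segment, so $(F_i)_{i \in S}$ is itself a regular sequence and the syzygy lies in the Koszul submodule: we may write $A_i = \sum_{k \in S,\, k \neq i} c_{ik} F_k$ with $c_{ik} = -c_{ki}$ homogeneous of degree $D - d_i - d_k$. Using antisymmetry together with the identity $F_k f_i - F_i f_k = (F_k - f_k)f_i - (F_i - f_i)f_k$, a short manipulation gives
\[
\sum_{i \in S} A_i f_i \;=\; \sum_{i \in S} \biggl( \sum_{k \in S,\, k \neq i} c_{ik}(F_k - f_k) \biggr) f_i,
\]
in which each summand on the right has degree strictly less than $D$. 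Substituting this back into $q = \sum a_i f_i$ produces a new presentation $q = \sum \tilde a_i f_i$ with strictly smaller $\max_i(\deg \tilde a_i + d_i)$. Iteration terminates, since the maximum is a nonnegative integer bounded below by the top degree of $q$, and the process halts only once $\sum A_i F_i \neq 0$, at which point this nonzero element is $\rom{in}(q)$ and visibly lies in $P$. The crucial use of the regular-sequence hypothesis is this Koszul reduction; without it, cancellation among leading contributions of $\sum a_i f_i$ could leave $\rom{in}(q)$ outside of $P$ and the argument would collapse.
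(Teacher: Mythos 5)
Your proof is correct and follows essentially the same route as the paper's: the crucial point in both is that a homogeneous syzygy on the regular sequence $F_1, \ldots, F_n$ is a combination of Koszul relations, which lets one rewrite a presentation $\sum_i a_i f_i$ so as to strictly lower the top degree of the terms, combined with a minimal-degree argument to transfer primality from $P$ to $Q$. Your packaging of the first step as the identity $\mathrm{in}(Q) = P$ is a clean, equivalent reorganization of what the paper carries out inside its minimal-counterexample argument.
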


\begin{proof} Let $L(g)$ denote the leading form of $g \in R$.  

If the result is false we may first make a choice of $g,\, h \notin  Q$ such that $gh \in Q$  and such that, among all such choices, 
the degree of $gh$ minimum.  Second, for this choice of $g$ and $h$, among all ways of writing $gh = \sum_{i=1}^n r_if_i$, 
choose one such that the largest degree $\delta$ of any of the $L(r_if_i) = L(r_i)F_i$ is minimum.

If $\delta > \deg(gh)$, let $S$ be the set of indices $i$ such that $\deg(r_if_i) =\delta$.  Then $\sum_{i \in S} L(r_i)F_i = 0$, and 
the vector whose entries are the  $L(r_i)$ is a graded linear combination
of Koszul relations on the $F_i$, say, $\sum_{ij} h_{ij} (F_je_i - F_i e_j)$.  
We can  replace each $F_i$ by $f_i$ in this expression to obtain a relation on the $f_i$:  $\sum_i u_if_i = 0$.
Then $gh = \sum_{i=1}^n (r_i-u_i)f_i$ has a smaller value for $\delta$ on the right
hand side.  Hence, we may assume that $\delta = \deg (gh)$.  But then $L(g)L(h) \in P$,  and one of them,
say $L(g)$, is in $P$.  We may alter $g$ by subtracting a linear combination of the $f_i$ so as to cancel its leading form and 
so obtain $g'h \in Q$ with $g',\,h \notin Q$, contradicting the minimality of the degree of $gh$. \end{proof}
 \begin{corollary}\label{expand}  Let $K$ be an algebraically closed field, and let $R = K[\vect g B]$ denote a polynomial
 ring over $K$.  Suppose $K[\vect g B] \inc L[\vect x N] = S$, a polynomial ring over a field $L$ such that
 the inclusion is graded and $\vect g B$ is a prime sequence in $S$.  Then for every prime ideal $P$ of
 $R$,   $PS$ is prime. \end{corollary}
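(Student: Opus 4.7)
The plan is to deduce the result from Theorem~\ref{primeHilb} applied to the extension $R = K[\vect g B] \to S = L[\vect x N]$. I need to verify three things: (i) $R$ is a Hilbert ring; (ii) $R \to S$ is intersection flat; and (iii) $S/mS$ is a domain for every maximal ideal $m$ of $R$.

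Hypothesis (i) is classical: any finitely generated algebra over a field is a Jacobson (Hilbert) ring, and $R = K[\vect g B]$ is a polynomial ring over $K$. Hypothesis (ii) follows directly from the discussion preceding Theorem~\ref{primeHilb}. Since $\vect g B$ is a prime sequence in $S$, each $g_i$ must be a form of positive degree (any degree-zero $g_i$ would be either $0$ or a unit in $L$, both contradicting the prime-sequence hypothesis), and the sequence is in particular a regular sequence in $S$. Extend it to a full homogeneous system of parameters $\vect g N$ of $L[\vect x N]$. The composition
\[ K[\vect g B] \to L[\vect g B] \to L[\vect g N] \to L[\vect x N] \]
is free at each step: the first because $L$ is free as a $K$-module (a vector space over the field $K$), the second because it is an inclusion of polynomial rings (free with basis the monomials in $g_{B+1}, \ldots, g_N$), and the third by Cohen--Macaulayness of $L[\vect x N]$ over the subring generated by a homogeneous system of parameters. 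Since $R \to S$ is free, it is intersection flat.

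The substantive step is hypothesis (iii). Because $K$ is algebraically closed, the weak Nullstellensatz tells us that every maximal ideal of the polynomial ring $R = K[\vect g B]$ has the form $m = (g_1 - a_1, \, \ldots, \, g_B - a_B)$ for some $\vect a B \in K$. In the $\N$-graded domain $S$, since $\deg g_i > 0$ for each $i$, the leading form of $g_i - a_i$ is precisely $g_i$. By hypothesis $\vect g B$ is a prime sequence, so it is a regular sequence of forms generating the prime ideal $(\vect g B)S$. Proposition~\ref{grprime} then applies to conclude that $mS = (g_1 - a_1, \ldots, g_B - a_B)S$ is prime in $S$, i.e., $S/mS$ is a domain.

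With (i), (ii), and (iii) in hand, Theorem~\ref{primeHilb} yields that $S/PS$ is a domain for every prime $P$ of $R$, which is the stated conclusion. The only genuinely substantive step is (iii), and Proposition~\ref{grprime}---the deformation-to-the-initial-ideal result the authors have just proved---is exactly the tool tailored to this specialization; the remaining work is routine bookkeeping in the framework the authors have already assembled.
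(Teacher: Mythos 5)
Your proposal is correct and follows essentially the same route as the paper: the paper's proof likewise invokes Proposition~\ref{grprime} to show $(g_1-c_1,\ldots,g_B-c_B)S$ is prime for all $\vect c B \in K$ (covering the maximal ideals of $R$ via the Nullstellensatz) and then concludes by Theorem~\ref{primeHilb}, with the intersection flatness of $K[\vect g B]\to L[\vect x N]$ supplied by the freeness argument in the preceding discussion, exactly as you spell out.
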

 
 \begin{proof} By Proposition~\ref{grprime} above, for any  $\vect cB \in K$,  $g_1-c_1, \, \ldots, \, g_B-c_B$
 is prime in $S$.  The result is now immediate from Theorem~\ref{primeHilb}. \end{proof}
Corollary~\ref{expand} can also be deduced from \cite{EGA}, Th\'eor\`eme 12.1(viii). 

We also note the following fact, which is immediate from \cite{Serre}, Proposition 15, p.~IV--25.
 \begin{proposition}\label{primary} Let $R$ to $S$ be flat extension of Noetherian rings, and let $M$ be a $P$-coprimary
 $R$-module, i.e.,  the set of associated primes of $M$ is $\{P\}$.  Suppose that $PS$ is prime.  Then
 $S \otimes M$ is $PS$-coprimary. \qed \end{proposition}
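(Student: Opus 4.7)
The plan is to invoke exactly the base-change formula for associated primes that the authors cite. For a flat homomorphism of Noetherian rings $R \to S$ and an $R$-module $M$ (finitely generated being the situation of interest), the formula
\[
\mathrm{Ass}_S(S \otimes_R M) \;=\; \bigcup_{Q \in \mathrm{Ass}_R(M)} \mathrm{Ass}_S(S/QS)
\]
is precisely Proposition~15 on p.~IV--25 of \cite{Serre}. With that tool in hand the argument is short.

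First I would apply the hypothesis that $M$ is $P$-coprimary: this means $\mathrm{Ass}_R(M) = \{P\}$, so the right-hand union collapses to $\mathrm{Ass}_S(S/PS)$. Next I would use the hypothesis that $PS$ is prime: then $S/PS$ is a Noetherian integral domain, and the only associated prime of a domain (as a module over itself) is the zero ideal, giving $\mathrm{Ass}_S(S/PS) = \{PS\}$. Combining the two identifications yields $\mathrm{Ass}_S(S \otimes_R M) = \{PS\}$, which is exactly the statement that $S \otimes_R M$ is $PS$-coprimary.

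There is no real obstacle; the content of the proposition is entirely carried by Serre's formula. The only caveat worth verifying is that the version of the formula being quoted genuinely applies in the form needed (finitely generated $M$ over a Noetherian $R$, with $S$ Noetherian and flat over $R$), but each hypothesis is in place in the proposition's statement and in the polynomial-ring contexts where this result will subsequently be invoked.
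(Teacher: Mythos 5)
Your proof is correct and is exactly the paper's argument: the paper's entire justification is the citation of Serre's Proposition~15 (the flat base-change formula for associated primes), and you have simply spelled out why the conclusion is immediate from it. The caveat about finite generation of $M$ is the right one to flag, and it holds in every context where the proposition is applied.
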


\section{Bounding all data for calculations with ideals or modules when the number of variables is known}\label{bound} 

The results of this section are expected, and likely can be deduced by nonstandard methods 
as in \cite{DS} or possibly even from \cite{Seid}, and they are closely related in both content and methods to those of \cite{EGA}, (9.8).   
However, what we need is not precisely given in any of those papers, and we give a brief treatment here that contains what we need 
for both this and subsequent papers.

 Let $R = K[\vect x B]$ be a polynomial ring over an algebraically closed field. When we refer to degrees we have in mind the standard  grading in which the variables have degree one.   
 But one may use a nonstandard positive integer grading instead, since the ratio of the two notions of degree
 is bounded above and below by constants.   Consider an $m \times n$ matrix
 $\cM$ with entries in $R$ such that the degrees of the entries are at most a given integer $d$.  Let $M \inc R^m$ be the 
 column space of $\cM$. 
In this section we show that bounds for the data of a primary decomposition of $M$ in $R^m$ (respectively, of a
finite free resolution of $M$)  can be given in terms of $B,\, m, n, d$, where the {\it data} of the decomposition include
the number of associated primes, the number of generators of each, and the number of generators and the degrees
of the generators of all the modules in the primary decomposition.  As will be evident from the proof, one can keep
track of more numerical characteristics.  By the {\it data} of a finite free resolution, we mean the length, the ranks of the
free modules modules occurring, and the degrees of the entries of the matrices.  The bounds are independent of the choice of $K$.  
We also obtain bounds for the operations occurring in Theorem~E when the number of variables
is bounded. 

The results of this section are  very different from other bounds obtained elsewhere in the paper, because
they are allowed to depend on $B$, the number of variables in the polynomial ring.   We shall apply
them in situations where we have a bound on $B$ that is independent of $K$ and $N$.

We give a brief overview of the key ideas in this section.  We are working with $n$ polynomials of
degree at most $d$ in $B$ variables over some algebraically closed field $K$.
We want to bound the numerical data associated
with with a finite free resolution or a primary decomposition of a module constructed as a cokernel of a matrix of given size (the
number of entries is at most $n$) whose entries are among 
these polynomials.  We want the bounds to involve only $n$, $d$ and $B$, and to be valid over any algebraically closed field $K$.  To achieve this, we replace
all of the coefficients by indeterminates over the integers  $\Z$.  Let $A$ denote the polynomial
ring over $\Z$ generated by all of these indeterminate coefficients.  The problem is then to bound the resolution or primary decomposition
after specializing the coefficients by applying an arbitrary homomorphism from $A$ to an algebraically closed field.  It is easier to approach the problem if
we think of $A$ as an arbitrary Noetherian domain, so that we have the freedom of replacing $A$ by a homomorphic image domain
and can assume, by Noetherian induction, that we know the result for the homomorphic image.  This means that the problem
reduces to constructing the needed bounds for all homomorphisms $A \to K$ whose kernels lie in a non-empty Zariski open subset $U$ of 
$X = \Spec(A)$.  The task then remains to construct the bounds for the all the irreducible components of the proper closed
set  $X - U$.  But these correspond to proper homomorphic image domains of $A$, and so we may apply Noetherian induction.

As indicated above, to construct the bounds for a non-empty open subset of $\Spec(A)$,  we start by constructing the resolution or primary
decomposition over the algebraic closure $\cF$ of $\fra(A)$.   This field is a directed union of module-finite extensions $A'$ of localizations
$A_a$ of $A$, where $a \not=0$.   By taking $A'$ sufficiently large, we can descend the resolution or primary decomposition over $\cF$ 
so that it only involves modules over  $A'$.  We can then make use of sufficiently (but finitely) many instances of
Grothendieck's lemma of generic freeness (in doing this, we localize at one more nonzero element of $A$)  so that the resolution or primary decomposition is preserved by base change from
$A'$ to any algebraically closed field.  If the new choice of $A'$ is module-finite over  $A_{a_1}$,  this solves the problem over
the open set $\Spec(A_{a_1})$. Note that any homomorphism from  $A_{a_1}$ to an algebraically closed field extends to
the module-finite extension $A'$.  The details are carried through in the remainder of this section.

\begin{theorem}\label{fixedB}  Let $h \geq 2$, $B$, $m$, $n$, $r$, and $d$ vary in $\N$.
Then there exist ascending functions $\cT(B,m,r,n,d)$, $\cG(B,m,n,d)$, $\cL(B,m,n,d,h)$, $\cE(B,m,n,d)$, and 
$\cP(B,m,n,d)$ with values in $\Z_+$ with the properties described below. 
Let $K$ be an algebraically closed field and let $R = K[\vect xB]$ be 
the polynomial ring in $B$ variables over $K$.   Let $m, n, d \in \N$, let $\cM$ be an $m \times n$ matrix over 
$R$ whose  entries have degree at most $d$, and let
$M$ be the column space of $\cM$.f
\begin{enumerate}[(a)]

\item Given an $r \times m$ matrix over $R$ with entries of degree at most $d$, thought of as a map from $R^r \to R^m$, 
and a set of generators for a submodule $M$ of $R^m$ bounded by $n, d$, there is a set of generators
for $\Ker (R^r \to R^m \surj R^m/M)$ bounded by $\cT(B,m,r,n,d)$.
\item There exist
sets of generators for $M \cap N$, $M:_R N$, $M:_G I$, and $M:_R I^{\infty}$ bounded by $\cG(B,m,n,d)$. 
\item There exists a set of generators for $M_1 \cap \cdots \cap M_h$ bounded by $\cL(B,m,n,d,h)$.
\item There exists an ascending function $\cE(B,m,n,d)$ that bounds
the number of primary components in an irredundant primary decomposition of $M$ in $R^m$, the number
of and the degrees of the generators of every prime and primary ideal occurring, and the number of generators 
and the degrees of the entries  
of the generators for every module in the decomposition. 


\item There exists an ascending function $\cP(B,m,n,d)$  that bounds, independent of $K$, 
the length a  free resolution of $M$, the ranks of the free modules occurring, and the degrees 
of all of the entries of all of the matrices occurring.  In the graded case, $\cP(B,m,n,d)$ bounds the twists of $R$ that
occur as summands in a minimal free resolution of $M$.

\end{enumerate}
\end{theorem}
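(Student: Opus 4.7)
The plan is to derive all five parts from classical effective bounds for Gr\"obner basis computations in a polynomial ring of fixed dimension $B$, combined with standard module-theoretic constructions; the bounds will be independent of $K$ because the underlying algorithms operate syntactically on coefficient data and can be run uniformly over the prime field. The key external ingredient is the Hermann--Seidenberg type bound, as sharpened by Giusti and Bayer--Mumford: for any submodule of $R^r$ generated in degree $\leq d$ over $R = K[\vect x B]$, both the Castelnuovo--Mumford regularity and the maximal degree appearing in a reduced Gr\"obner basis are bounded by a function of $B$, $r$, and $d$ alone (roughly doubly exponential in $B$, but explicit).

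First I would prove (a). The kernel of $R^r \to R^m \surj R^m/M$ is the degree-$\leq$-whatever syzygy module of the combined matrix $[\cA \mid \cM]$, where $\cA$ is the given $r \times m$ matrix (interpreted per the paper's convention). By Schreyer's lifting procedure, a generating set for this syzygy module is read off from any Gr\"obner basis of $[\cA \mid \cM]$, so the effective bound above immediately supplies $\cT(B,m,r,n,d)$, with control on both the number and the degrees of generators. Parts (b) and (c) then follow by standard reductions: $M \cap N$ is the $R^m$-image of the kernel of $(u,v) \mapsto \cM u - \cN v$; the colons $M:_R N$ and $M:_G I$ are kernels of multiplication-by-generator maps on $R^m/M$; and $M:_R I^\infty$ stabilizes at $M:_R I^s$ with $s$ bounded by the regularity. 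Each operation applies (a) a bounded number of times, yielding $\cG$; iterating (b) at most $h-1$ times gives $\cL$.

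For (e), Hilbert's syzygy theorem guarantees a free resolution of $R^m/M$ of length at most $B$. Starting from the presentation, each successive syzygy module is computed by one application of (a), with the rank bound at each step controlled by the previous bound on number of generators; after $B$ iterations all ranks and degrees are bounded by a function $\cP(B,m,n,d)$. In the graded case the twists occurring in a minimal resolution are bounded by the generator degrees at each stage, so the same iteration controls them.

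The main obstacle is (d), primary decomposition, where one must bound not only the number, generator count and degrees of the primary and associated prime components, but also the exponents needed for each primary component. One approach is algorithmic: the Gianni--Trager--Zacharias / Shimoyama--Yokoyama / Eisenbud--Huneke--Vasconcelos algorithms reduce primary decomposition to a sequence of ideal operations of the types covered by (a)--(c), together with generic linear changes of variables and recursion on dimension; the recursion descends at most $B$ times, so composing the bounds yields $\cE(B,m,n,d)$. Independence from $K$ follows because the algorithms are field-uniform. A more conceptual route, closer to \cite{EGA} (9.8), would observe that the space of matrices $\cM$ with entries of degree $\leq d$ is a finite-dimensional affine space over the prime field, and that all the numerical invariants in question are constructible functions on this space, hence take only finitely many values; the maximum yields the desired bound uniformly in the algebraically closed field $K$.
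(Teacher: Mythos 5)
Your proposal is essentially correct, but it reaches the theorem by a genuinely different mechanism from the paper's. You derive (a), (b), (c), (e) from classical effectivity results in a fixed number of variables (Hermann--Seidenberg degree bounds, Schreyer's theorem for syzygies, Hilbert's syzygy theorem), and (d) either from field-uniform primary decomposition algorithms or from constructibility over the parameter space. The paper instead handles all five parts with a single device: it lets $A$ be the polynomial ring over $\Z$ in the indeterminate coefficients of a generic instance, performs the kernel/intersection/colon/primary decomposition/resolution over a module-finite extension of a localization $A_a$, uses generic freeness to make that computation commute with base change to an arbitrary algebraically closed field, and then stratifies $\Spec(A)$ into finitely many locally closed affines by Noetherian induction, so that every actual instance is a specialization of one of finitely many generic ones. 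Your ``more conceptual route'' for (d) is precisely this argument, and the paper itself concedes at the outset of the subsection that the results ``likely can be deduced by nonstandard methods as in \cite{DS} or possibly even from \cite{Seid}.'' The trade-off is real: your route yields, in principle, explicit (doubly exponential in $B$) bounds, whereas the paper's Noetherian induction is purely existential --- there is no control on the number of strata, hence no explicit bound; conversely, the paper's argument is self-contained and does not require verifying that the classical effectivity results are uniform in the algebraically closed field and its characteristic and that they extend from ideals to submodules of $R^m$, none of which is stated in exactly the needed form in the sources you invoke (the paper notes this explicitly).

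Two soft spots to repair if you pursue your route. First, the saturation $M:_G I^{\infty}$ is not controlled by the regularity of $M$ for an arbitrary ideal $I$; regularity governs saturation with respect to the irrelevant maximal ideal. Instead use $M:_G I^{\infty}=\bigcap_i\bigl(M:_G f_i^{\infty}\bigr)$ for generators $f_i$ of $I$ together with an effective bound on each single-element saturation exponent, or read the saturation off the primary decomposition as the intersection of the components whose primes do not contain $I$, which is what the paper does. Second, for the algorithmic route to (d), the recursions in the Gianni--Trager--Zacharias type algorithms involve generic coordinate changes, radical computations, and splittings whose uniformity over all algebraically closed fields must be argued separately; the constructibility argument sidesteps this entirely and is the safer way to finish (d).
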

\begin{discussion}\label{idea} As already indicated in the introductory paragraphs of this section, we shall prove this result by first considering the case where all the entries of the matrices occurring and all entries of generators of the modules and ideals occurring are replaced by generic polynomials of degree at most $d$ with distinct coefficients $u_t$ that are variables over $\Z$.  In this discussion we give
further detail.

 Let $A$ denote the polynomial ring over $\Z$ in all the variable coefficients.  Then we can cover
$\Spec(A)$ by a finite number of locally closed affines $\Spec(A_s)$ for each of which there
is a generic calculation of the kernel, intersection, colon, primary decomposition of the module, or a generic finite free resolution
(for these,  each $A_s$ is replaced by a module-finite extension domain $A_s'$).  These generic  calculations
specialize to give all ones needed
when we replace the variables $u_t$ by elements of an algebraically closed field $K$:  when we
make that replacement, we obtain a map $A \to K$ whose kernel  $P \in \Spec(A)$ lies in one of the
$\Spec(A_i)$ for $i \geq 1$,  and the required calculation over $K$ is
obtained by extending the map $A_i \to K$ to a map $A_i' \to K$, and then tensoring over
$A_i'$ with $K$.  A more complete explanation is given below.

To carry through this idea, we first do the generic calculation or  primary decomposition or free resolution over an
open affine $\Spec(A_1)$ in $\Spec(A)$.  The complementary closed set is a union of closed irreducibles.
We can then iterate the procedure with each of these irreducible closed sets.  We carry out this
construction when $A$ is  an arbitrary Noetherian domain, and the results we need will
follow readily once we have carried through the first step, i.e., once we have shown that
we can find an open affine $A_1$ and a module-finite extension $A_1'$ where there is a generic calculation, or 
primary decomposition, or free resolution.  It then follows by Noetherian induction that for each irreducible
component of the complement of $A_1$, one already has a finite cover by locally closed affines as
described.  Discussion~\ref{generic} just below together with Proposition~\ref{genops} construct $A_1$ for an 
arbitrary Noetherian domain $A$.\end{discussion}

\begin{discussion}\label{generic}
Let $A$ be a Noetherian domain and let $R_A = A[\vect x B]$.
Let $\cQ_A$ (respectively, $\cM_A$) be a $r \times m$ (respectively, $m \times n$) matrix over $R_A$  and let
$M_A$ be the column space of $\cM_A$.  Let $I_A$ be an ideal of $R_A$, and let $N, \, M_{1,A}, \ldots, \, , M_{h,A}$
be submodules of $G_A := R_A^m$.  Let $W_A$ be the kernel of the composite map 
$$R_A^r \to R_A^m \surj R_A^m/M_A,$$ where the map on the left has matrix $\cQ_A$. 
For any $A$-algebra $S$, let $R_S$, $M_S$, $G_S$, etc. denote the tensor products 
over $A$ of $R_A$, $\cM_A$ with $S$. In the case of $\cM_A$ or $\cN_A$,  $\cM_S$ or $\cN_S$ is the result of replacing each
entry of the matrix considered by its image in $S$.  
Let $\cF$ denote an algebraic closure of the fraction field of $A$.  In the case of ideals $I_A$ or submodules of $M'_A$
of  $G_A$, the change
in subscript from $A$ to $S$ indicates that $I_A$ or $M_A$ is to be replaced by its image in $R_A$ or $G_A$.

A well-known form of generic flatness (perhaps more accurately, generic freeness) asserts that if $W_{A}$ is a finitely 
generated $S_{A}$-module, where
$S_A$ is a finitely generated algebra over a Noetherian domain $A$, one can localize at one element of 
$a \in A-\{0\}$ so that $(W_A)_a$ is $A_a$-free. It is also true that if $T_A$ is a finitely generated $S_A$-algebra,
that $W_A$ is a finitely generated $T_A$-module, and $Q_A$ is a finitely generated $S_A$-submodule of $W_A$,
one may localize at one element of $A$ so that $(W_A/Q_A)_a$ is $A_a$-free:  see Lemma 8.1 of \cite{HR}.  Of course,
in applying this we may take $S_A$ to be $R_A$.  

Note that for $I_A \inc R_A$ or $M'_A \inc G_A$ there are two possible meanings for $I_S$ and $M'_S$:  one
is $I_A \otimes_A S$ (respectively, $M'_A \otimes_A S$) and the other is its image in $R_S$ (respectively,
$G'_S$).  By the theorem on generic freeness, using the image will be the same as the result of tensoring with  $S$ if 
we first replace $A$ by a suitable localization at one element of $A-\{0\}$,  which we will be free to do in this section, and
we shall assume that $A$ has been replaced by such a localization for which the two agree.  

Let $\cA$ denote the family of extension
rings of $A$ within $\cF$ obtained by localizing at
one element of $A-\{0\}$ and then adjoining finitely many integral elements of $\cF$.  (The same rings may
be obtained by adjoining finitely many integral elements of $\cF$ to $A$ and then localizing at one element of $A-\{0\}$.)  
Note that $\cF$ is the directed union of the rings in $\cA$.  

For each of  $W_\cF$ ($W_A$ is defined above as a certain kernel), $M_{1,\cF} \cap \cdots \cap M_{h,\cF}$,
$M_{\cF} :_{R_\cF} Q_\cF$,  $M_\cF:_{G_\cF} I_{\cF}$, 
$M_{\cF}:_{G_{\cF}}I_{\cF}^\infty$,
a chosen irredundant primary decomposition of $M_\cF$ in $G_\cF$,  and a chosen finite free resolution of
$\cM$ over $R_\cF$,  one can choose $A' \in \cA$, module-finite
over $A_1 = A_a$, such 
that the kernel, intersection, colon, primary decomposition or finite free resolution is defined over $A'$.
It may not have the same property over $A'$, but that can be restored after localizing at one nonzero
element of $A$. 

\begin{proposition}\label{genops} Let notation and hypotheses be as in Discussion~\ref{generic} just above. 
After localizing at one more nonzero element of $A$, we have a calculation of the kernel, intersection,
or colon, or a primary decomposition or finite free resolution over $A' \in \cA$  which is preserved by
arbitrary base change to an algebraically closed field $K$.  Since every map $A_1 \to K$,  where $K$ is an
algebraically closed field, extends to a map $A' \to K$,  the kernel, intersection, colon, primary decomposition
and finite free resolution over $K$ arise  from the one over $A'$ by specialization, i.e., by base change from
$A'$ to $K$. \end{proposition}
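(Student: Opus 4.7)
The plan is to unify everything under generic freeness (Lemma~8.1 of \cite{HR}) together with a spreading-out principle for primality. In every case one first spreads the chosen construction from $\cF$ down to some $A'\in\cA$ and then inverts one further nonzero element of $A$ so that a finite list of relevant $R_{A'}$-modules becomes $A'$-free; freeness over $A'$ forces base change $-\otimes_{A'}K$ to commute with the formation of kernels, cokernels, and intersections, and hence to preserve the defining diagrams.

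For the kernel, intersection, and colon operations, and for the finite free resolution, this plan works directly. The kernel $W_{A'}$ is the kernel of a map of finitely generated free $R_{A'}$-modules; $M_{1,A'}\cap\cdots\cap M_{h,A'}$ is the kernel of the diagonal map $G_{A'}\to\bigoplus_i G_{A'}/M_{i,A'}$; the colons $M:_R Q$ and $M:_G I$ are realized analogously as kernels of natural maps to products of quotients; and $M_\cF:_{R_\cF}I_\cF^\infty = M_\cF:_{R_\cF}I_\cF^n$ for some $n$ by Noetherianity of $R_\cF$, an equality inherited by $A'$ once we localize to kill the finitely generated $A'$-module $(M_{A'}:_{R_{A'}}I_{A'}^{n+1})/(M_{A'}:_{R_{A'}}I_{A'}^n)$. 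A finite free resolution of $R_\cF^m/M_\cF$ (existing by the syzygy theorem) spreads to a finite complex of free $R_{A'}$-modules; applying generic freeness to each of its finitely many cohomology modules, one further localization of $A$ makes those cohomologies vanish universally after base change. In every instance all terms are $A'$-free, so $-\otimes_{A'}K$ preserves the exact sequences that define the construction.

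The delicate case, and the main obstacle, is the primary decomposition. Fix the chosen irredundant decomposition $M_\cF=Q_{1,\cF}\cap\cdots\cap Q_{s,\cF}$ with each $Q_{i,\cF}$ being $P_{i,\cF}$-primary, spread the generators to produce $Q_{i,A'}\subset G_{A'}$ and $P_{i,A'}\subset R_{A'}$, and use the intersection case to ensure $M_{A'}=\bigcap_i Q_{i,A'}$ is preserved by base change. It then suffices to verify, after one more localization, that for every map $A'\to K$ with $K$ algebraically closed, (i) $P_{i,A'}R_K$ is prime, and (ii) $Q_{i,A'}\otimes_{A'}R_K$ is $(P_{i,A'}R_K)$-coprimary. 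Claim (ii) follows from (i) via Proposition~\ref{primary}, once generic freeness on $G_{A'}/Q_{i,A'}$ makes the base change flat on the relevant modules; distinctness of the $P_{i,K}$, and thus irredundancy of the specialized decomposition, follows from (i) applied to the mutually distinct $P_{i,\cF}$.

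The hard point is (i), and the plan is to invoke the spreading out of geometric integrality (compare \cite{EGA}, (9.7.7)). Because $\cF$ is algebraically closed and $P_{i,\cF}$ is prime, $R_\cF/P_{i,\cF}$ is a geometrically integral $\cF$-algebra; since $\cF$ is algebraic over $\fra(A')$, the generic fiber $R_{A'}/P_{i,A'}\otimes_{A'}\fra(A')$ is already geometrically integral over $\fra(A')$. A standard generic-flatness-plus-openness-of-the-integrality-locus argument then shows that, after inverting one more element of $A$, every fiber $R_{A'}/P_{i,A'}\otimes_{A'}\kappa(\mathfrak{p})$ is geometrically integral over $\kappa(\mathfrak{p})$ for each $\mathfrak{p}\in\Spec A'$. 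For any algebraically closed $K$ and any homomorphism $A'\to K$ with kernel $\mathfrak{p}$, the map factors as $A'\to\kappa(\mathfrak{p})\inj K$, whence $R_K/P_{i,A'}R_K = \bigl(R_{A'}/P_{i,A'}\otimes_{A'}\kappa(\mathfrak{p})\bigr)\otimes_{\kappa(\mathfrak{p})}K$ is a domain by geometric integrality; equivalently, $P_{i,A'}R_K$ is prime. (In the intended application $A$ is a polynomial ring over $\Z$, so $A'$ is a Hilbert ring, and an alternative route to (i) combines Theorem~\ref{primeHilb} with Corollary~\ref{expand}, reducing primality after base change to primality modulo the maximal ideals of $A'$.) Collecting all of the finitely many localizations above into a single inversion of one element of $A$ produces the $A'$ asserted by the proposition.
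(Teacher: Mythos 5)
Your overall strategy (spread the construction from $\cF$ down to some $A'\in\cA$, then invert one element of $A$ so that generic freeness makes all relevant modules $A'$-free and base change commutes with the defining exact sequences) is exactly the paper's, and your treatment of kernels, intersections, colons, and finite free resolutions matches it; your handling of $M:_GI^\infty$ via stabilization $M:I^n=M:I^{n+1}$ is a valid variant of the paper's route (which instead identifies $M:I^\infty$ with the intersection of the primary components whose primes do not contain $I$). For point (i), primality of $P_{i,A'}R_K$, your appeal to constructibility of the geometrically-integral locus (EGA IV) is a legitimate alternative; the paper instead gives a self-contained argument, reducing via Noether normalization, inseparable extension, and the primitive element theorem to keeping a single monic separable polynomial irreducible, which is enforced by writing down all potential factorizations and using the Nullstellensatz to see the resulting equations define the empty set. (Your parenthetical alternative via Theorem~\ref{primeHilb} is circular: that theorem deduces primality at general primes of $A'$ from primality at \emph{all maximal ideals} of $A'$, which is precisely the specialization statement you are trying to prove; it cannot be bootstrapped from the generic point alone.)

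The genuine gap is point (ii), preservation of the coprimary property. Proposition~\ref{primary} requires the ring homomorphism $R_{A'}\to R_K$ to be flat, and it is not: it is the base change of $A'\to K$, which is flat only when $\Ker(A'\to K)=0$. Generic freeness of $G_{A'}/Q_{i,A'}$ as an $A'$-module does not repair this; it guarantees that $-\otimes_{A'}K$ computes the correct module, but gives no control over $\mathrm{Ass}_{R_K}$ of the result, which is what coprimariness is about. (One could try to apply Proposition~\ref{primary} to the flat extension $R_{\kappa(\mathfrak p)}\to R_K$, but then you must first know that the fiber $(G_{A'}/Q_{i,A'})\otimes_{A'}\kappa(\mathfrak p)$ is coprimary, which is again the specialization problem.) The paper's fix is different and you need some version of it: filter $G_{A'}/Q_{i,A'}$ by submodules whose successive quotients are torsion-free $(R_{A'}/P_{i,A'})$-modules, embed each quotient in a free $(R_{A'}/P_{i,A'})$-module, and localize so that the filtration and the embeddings persist under arbitrary base change; after specialization the factors embed in free modules over the domain $R_K/P_{i,K}$, so the only associated prime of the nonzero specialized module is $P_{i,K}$. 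A smaller slip: distinctness of the $P_{i,K}$ does not follow from each being prime — you must separately preserve, for each pair, a witness element of $P_{i,A'}\setminus P_{j,A'}$ remaining nonzero modulo $P_{j,K}$ — and irredundancy requires in addition preserving that each intersection omitting one component stays distinct from $M$; both are handled by the same keep-a-cyclic-module-nonzero device used elsewhere in your argument.
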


\begin{proof}  
We shall be applying generic freeness repeatedly with $A'$ replacing $A$.  Since every nonzero element of $A'$ 
has a nonzero multiple in $A$,  we may assume in these applications that we are localizing at an element of $A-\{0\}$.  
We may localize at one
element of $A$ and achieve a finite number of instances of freeness over $A'$.

First note that after localizing at one element of $A-\{0\}$, we can preserve the
exactness of a finite number of short exact sequences of finitely generated $R_A$-modules upon tensoring
with {\it any} $A'$-algebra $L$.  We may also preserve the inclusions in a finite filtration of a finitely
generated $R_{A'}$-module,  as well as the injectivity of an $A'$-algebra map $S_{A'} \to T_{A'}$ of finitely
generated $A'$-algebras upon tensoring with an arbitrary $A'$-algebra $L$ over $A'$.  
We may preserve intersections of two (hence,
finitely many) submodules $M_{A'}$, $N'_{A'}$ of a finitely generated $R_{A'}$ module  $W_{A'}$,
because we may preserve the inclusions of $M_{A'}$ and $N_{A'}$ in $W_{A"}$
as well as the exactness of the sequence 
$$0 \to M_{A'} \cap N'_{A'} \to M_{A'} \oplus N_{A'} \to M_{A'} + N_{A'} \to 0$$ under arbitrary base 
change by localizing at one element of $A-\{0\}$ so that all the modules involved become $A'$-free.

With these remarks it is obvious that we can preserve the exactness of
$$0 \to W_A' \to R_{A'}^r \to R_{A'}^m/M_A \to 0$$
under any base change $A' \to L$.  We already know that we can preserve finite intersections of submodules.
If $\vect  u t$ generate $N_A$,  we have an exact sequence 
$$0 \to M_{A'}:R_{A'} N_{A'} \to R_{A'} \to (G_{A'}/M_{A'})^{\oplus t}$$
where the image of $r \in R_A$ is the vector whose $t$ entries are the images of the elements $ru_j$ in $G_{A'}/M_{A'}$.
Likewise, if $\vect f t$ generate $I_{A'}$, we have an exact sequence 
$$0 \to M_{A'}:_{G_{A'}} I_A \to G_{A'} \to (G_A/M_A)^{\oplus t}$$ where the image of $u \in G_{A'}$
is the image of the vector $(f_1u, \, \ldots, \, f_tu)$.  
 
We now consider primary decomposition. 
We can preserve that an element of a module  (hence, the module itself) is nonzero, if that is true
after tensoring with $\cF$.   Call the element
$u_A$ and the module $Q_A$.  We may localize so that all the terms of $0 \to R_Au_A \to Q_A \to W_A \to 0$
become $A$-free,  and $R_Au_A$ is then a nonzero free $A$-module.  Then  $R_Lu_L$ is nonzero
for every nonzero $A$-algebra $L$,  and injects into $Q_L$.   This enables us to keep modules distinct, and to
keep ideals distinct. 

If a primary decomposition of $M_{A'}$ in $G_A$ is irredundant, this can be preserved:  we can localize sufficiently
that intersection commutes with base change for all finite sets of primary components, and we can keep every intersection 
that omits a component distinct from $M_{A'}$.  Likewise, we can keep all the primes that occur distinct.  
We need an additional argument to show that the primes remain primes and that components remain primary.

If $P_{A'}$ is such that $P_\cF$ is prime, we can localize at one element of $A-\{0\}$ and guarantee
that $P_L$ is prime for every map of $A'$ to an algebraically closed field $L$.  In fact, it suffices to preserve that 
$D_{A'} = R_{A'}/P_{A'}$
is a domain for a finitely generated $A'$-algebra $D_{A'}$, given that $D_\cF$ is a domain.  After localizing
at one nonzero element of $A$, we have that $D_{A'}$ is module-finite over a polynomial ring over $A'$.
After enlarging $A'$ and $D_{A'}$ by adjoining finitely many $p^e\,$th roots of elements of $A'$ and of the variables,
we may assume that the $D_{A'}$ is contained in a domain $D'_{A'}$ obtained by making a separable extension of
the fraction field of a polynomial ring over $A'$ and adjoining finitely many integral elements in that separable extension.  
By the theorem on the primitive element for separable field extensions, $D'_{A'}$ has the same fraction field
as $A'[\vect x h][\theta]$ where $\theta$ satisfies a monic irreducible separable polynomial $H_A'$
over $A'[\vect x h]$.  Now we an choose $G_{A'} \in A'[\vect xh]$ such that 
$D'_{A'} \inc (A'[\vect x h]_{G_{A'}})[\theta]$. By inverting an element of $A-\{0\}$ we may assume that $G_{A'}$ is monic. 
To complete the proof, it suffices to show that we can, after enlarging $A'$,  
keep the minimal polynomial $H_{A'}$ of $\theta$ (which we may also assume is monic) irreducible no 
matter what field we tensor with.  This can be done using Hilbert's Nullstellensatz. For each positive degree $s$ 
strictly smaller
than the degree of $H_{A'}$,  we can write down a potential factorization of $H_{A'}$, namely  $H_{A'} = H'_{A'}H''_{A'}$,  
where
$H'_{A'}$ has degree $s$, and we use indeterminates for all the coefficients of $H'_{A'}$ and $H''_{A'}$.
Equating corresponding coefficients yields a system of polynomial equations in the unknown coefficients $Z_j$.
We know these equations have no solution in the algebraically closed field $\cF$.  Hence, the polynomials
we are setting equal to 0 generate the unit ideal in $\cF[Z_j:j]$.   They will therefore still generate the unit ideal
in $A'[Z_j:j]$   for a suitably large choice of $A'$.

We can preserve that a submodule is $P_{A'}$-coprimary:  filter the module by torsion-free modules over
$R_{A'}/P_{A'}$, and embed each in a free $(R_{A'}/P_{A'})$-module.  The filtration and the embedding will
be preserved by arbitrary base change after localization at a suitable element of $A-\{0\}$. 

Thus, we can choose a primary decomposition over $A'$ that is preserved by base change to any
algebraically closed field. 

To preserve $M_{A'}:I_{A'}^\infty$ under base change, we note that this module is the same as the intersection
of those primary components of $M_{A'}$ such that the corresponding prime does not contain $I_{A'}$.  

It is clear that one can preserve a finite free resolution:  its exactness is equivalent to the exactness
of finitely many short exact sequences.  
  \end{proof}
  
\end{discussion}
  
We use Discussion~\ref{generic} above to construct the
open affine $A_1$.  As mentioned earlier, we now obtain the required cover by locally closed open affines by applying
Noetherian induction to the irreducible components of the complement of $\Spec(A_1)$ in $\Spec(A)$.  \\

\noindent{\it Proof of Theorem~\ref{fixedB}.} 
By applying this procedure to $A$ as defined in Discussion~\ref{idea}, we obtain finitely many kernels, colons, intersections, primary
decompositions or finite free resolutions that give rise to all others needed over any
algebraically closed field by specialization.  The existence of the bounds stated in Theorem~\ref{fixedB}
is immediate. \qed

\begin{remark}  It is clear from the argument that we can bound much more if we choose to, by taking
a finer stratification.  For example, we can bound all the data associated with finite free resolutions
of the ideals and/or modules in the primary decomposition, and the same is true for finitely many other
ideals and/or modules formed from them by iterated intersection, colon, product, and sum.  \end{remark}

\section{The proof of the main  theorems A, B, C, D, E and F}\label{proof}

We shall prove that if Theorems A, B, C, D, E, and F  
hold for positive integers strictly less than $d$ then they hold also for degree $d$.  We note that all of the
theorems are obvious if $d = 1$.

To prove part (a) of Theorem A,  let $D: =  D(k-1,d-1)$, which bounds the number of generators
of a minimal prime of an ideal generated by a regular sequence of $k-1$ or fewer forms of degree $d-1$,
and which exists by the induction hypothesis.  Let $\Phi$ be as in Proposition~\ref{phi},  which also exists
by the induction hypothesis, since one has a function ${}^3\cB(n,d-1)$ as in Corollary B.  If the strength of 
the $d$-form $F$ is at least $\Phi(D,d)$, but the height of $(\cD F)R$ is at most $k-1$,  we can choose $k-1$  or fewer polynomials in $\cD F$ that form a maximal regular
sequence, and we can choose an associated (equivalently, minimal) prime of the ideal they generate that contains
$\cD F$.  The number of generators is bounded by $D= D(k-1,d-1)$.  
Hence, using only those generators of degree at most $d-1$,  we obtain that $\cD F$ is contained in an ideal $J$ 
generated by at most $D$ forms of degree $\leq d-1$.  
By Theorem F,  this contradicts the strength assumption on $F$.  In characteristic 0 or  $p > d$,
we could simply have assumed that $F$ has strength $D$.  

Part (b) of Theorem A follows from part (c). We use induction on $n$.
The result is clear if $n=1$.  We may assume $n >1$ and that any $n-1$ or fewer linearly independent 
homogeneous elements in $V$ form an $\rom{R}_\eta$-sequence. None of the elements in the basis
is in the ideal generated by the others:  if it were, we would get a graded relation on the basis elements in which
one of the coefficients is 1:  say it is the coefficient of an element of degree $i$.  Then a nonzero linear
combination of elements of degree $i$ has an $k$-collapse for $k \leq i-1$, a contradiction, since we
are assuming $\etA(i) \geq i-1$. 
Since the quotient by $n-1$ or fewer elements in the basis satisfies R$_\eta$, and so is a domain,
we may assume that a basis for $V$ consisting of forms is a regular sequence.  

From the property of the $\etA(i)$ stated in part (a) of Theorem A, each row of the Jacobian matrix of a basis
for $V$ with respect to $\vect x N$ generates an ideal of height $\eta + 3n-3 + 1 + 1 = \eta + 3n-1$ in the  
 polynomial ring.  Since $n \geq h$, where $h$ is the number of nonzero $V_i$,  
 by Theorem~\ref{combined}, the height of the defining ideal of the singular locus 
 of $R/(V)$ in $R/(V)$ is at least $\eta + 1$, so that $R/(V)$ satisfies $\rom{R}_\eta$. 

We next show that Theorem A in degree at most $d$ implies Theorem B in degree at most $d$.  Linearly order
the dimension sequences $\delta = (\vect \delta d)$ so that  $\delta < \delta'$ precisely if $\delta_i < \delta_i'$ for
the largest value of $i$ for which the two are different.  This is a well-ordering.  Assume that $\etB$ is defined for 
all predecessors of $\delta$.  If the vector space is $\etA(\delta)$-strong,  it satisfies $\rom{R}_\eta$
and we are done.  If not, for some $i$ an element of $V_i$ has an $\etA_i(\delta)$-collapse, and we can
express the element using at most $2\cdot\etA_i(\delta)$ forms of lower degree. This enables us
to form a new vector space in which $\delta_j$ remains the same for $j > i$, $\delta_i$ decreases
by 1,  and the $\delta_j$ for $j <i$  increase by a total of $2\cdot\etA_i(\delta)$.  If we let $\delta'$ run through
all dimension sequences, with this property, that precede $\delta$ in the well-ordering, we may take  
$\etB(\delta) = \max_{\delta'}\{\etB(\delta')\}$. This completes the proof of Theorem B.

Theorem C is immediate, because if $d$ bounds the degrees of the entries of the matrix,
then $mnd$ bounds the number of non-scalar homogeneous components of all entries, and  $C(m,n,d):= 
\max\{B(\delta): \sum_{t=1}^d\delta_t = mnd\}$ bounds the projective dimension of the cokernel. 

Theorems D and E follow at once from the existence of $\etB$ and Theorem~\ref{fixedB} of the 
preceding section, while as already noted, Theorem F in degree $d$ follows from Theorem B in
degree $d-1$ by Proposition~\ref{phi}.
\qed
\vskip 1in

\quad\bigskip

$\begin{array}{ll}
\textrm{Department of Mathematics}           &\qquad\qquad \textrm{Altair Engineering}\\
\textrm{University of Michigan}                    &\qquad\qquad \textrm{1820 E.\ Big Beaver Rd.}\\
\textrm{Ann Arbor, MI 48109--1043}            &\qquad\qquad \textrm{Troy, MI 48083}\\
\textrm{USA}                                                &\qquad\qquad \textrm{USA}\\
\quad & \quad\\
\textrm{E-mail: hochster@umich.edu}          &\qquad\qquad \textrm{E-mail: antigran@umich.edu}\\ 
        
\end{array}$


\begin{thebibliography}{99}

\bibitem{AH1} T.~Ananyan and M.~Hochster, \emph{Ideals generated by quadratic polynomials}, Math. Research Letters {\bf  19} (2012), pp.~233--244.
\bibitem{AH2} T.~Ananyan and M.~Hochster, \emph{Strength conditions, small subalgebras, and Stillman bounds in degree $\leq 4$},
preprint,  arXiv:1810.00413 [math.AC], September 30, 2018.
\bibitem{BS} J.~Beder, J.~McCullough, L.~Nunez-Betancourt, A.~Seceleanu, B.~Snapp, and B.~Stone \emph{Ideals with larger projective dimension and regularity}, J.~Symb.~Comput.~{\bf 46} (2011), 1105--1113.
\bibitem{Br} W.~Bruns, \emph{``Jede" endliche freie Aufl\"osung ist freie Aufl\"osung eines von 
drei Elementen erzeugten Ideals}, J.~Algebra {\bf 39} (1976), pp.~429--439.   
\bibitem{Bu} L.~Burch, \emph{A note on the homology of ideals generated by three elements in local rings}, Proc. Cambridge Philos. Soc. 64 (1968), pp.~949--952.
\bibitem{Ca} F.~W.~Call,  \emph{A theorem of Grothendieck using Picard groups for the algebraist},
Math.\ Scand.\ {\bf 74} (1994), pp.~161--183.
\bibitem{CK} G.~Caviglia and M.~Kummini, \emph{Some ideals with large projective dimension}, Proc. Amer. Math. Soc. 136 (2008), no. 2, pp. 505--509.
\bibitem{DS} L.~van~den~Dries and K.~Schmidt, \emph{Bounds in the theory of polynomial rings over fields. A  nonstandard approach}, Invent.~math.~{\bf 76} (1984), pp.~77--91.
\bibitem{EN} J.~A.~Eagon and D.~G.~Northcott, \emph{Ideals defined by matrices and a certain complex 
associated with them}, Proc.~Roy.~Soc.~Ser.~A {\bf 269} (1962), pp.~188--204.
\bibitem{E1} B.~Engheta, \emph{Bounds on projective dimension},  Ph.D. thesis, University of Kansas, 2005.
\bibitem{E2} B.~Engheta, \emph{A bound on the projective dimension of three cubics}, J. Symbolic Comput. {\bf 45} 
(2010), no. 1, pp.~60--73.
\bibitem{E3} B.~Engheta, \emph{On the projective dimension and the unmixed part of three cubics}, J. Algebra 316 (2007), no.2, pp.~715--734.
\bibitem{EGA} A.~Grothendieck  (r\'edig\'es avec la collaboration de Jean Dieudonn\'e),
\emph{\'El\'ements de g\'eom\'etrie alg\'ebrique : IV. \'Etude locale des sch\'emas et des morphismes de sch\'emas}, 
Troisi\`eme partie Publ.~ math.~de l'I.H.\'E.S., tome {\bf 28} (1966), pp. 5--255.
\bibitem{HE} M.~Hochster and J.~A.~Eagon, \emph{Cohen-Macaulay rings, invariant theory, and the generic
perfection of determinantal loci}, Amer.~J.~Math. {\bf 93} (1971), pp.~1020--1058.
\bibitem{HH} M.~Hochster and C.~Huneke,  \emph{F-regularity, test elements, and smooth base change},
Trans. of the A.M.S., Vol. {\bf 346} (1994), 1--62.
\bibitem{HR} M.~Hochster and J.~L.~Roberts, \emph{Rings of invariants of reductive groups acting on regular rings
are Cohen-Macaulay}, Advances in Math. {\bf 13} (1974), 115--175.
\bibitem{HMMS1} C.~Huneke, P.~Mantero, J.~McCullough, and A.~Seceleanu, \emph{The projective dimension of codimension two algebras presented by quadrics}, J.~Algebra {\bf 393} (2013), pp.~170--186.  
\bibitem{HMMS2} C.~Huneke, P.~Mantero, J.~McCullough, and A.~Seceleanu, \emph{A tight bound on the projective
dimension of four quardrics}, arXiv:1403.6334v2 [math.AC].
\bibitem{Ko} P.~Kohn, \emph{Ideals generated by three elements}, Proc. Amer. Math. Soc. 35 (1972), pp.~55--58.
\bibitem{Mc} J.~McCullough, \emph{A family of ideals with few generators in low degree and large projective dimension},  
Proc. Amer. Math. Soc. 139 (2011), pp.~2017--2023.
\bibitem{McS} J.~McCullough and A.~Seceleanu, \emph{Bounding projective dimension}, Commutative Algebra.
Expository Papers Dedicated to David Eisenbud on the Occasion of His 65th Birthday (I.~Peeva, Ed.),
Springer-Verlag London Ltd., London, 2013.
\bibitem{PS} I.~Peeva and M.~Stillman, \emph{Open problems on syzygies and Hilbert functions}, J. Commut. Algebra Vol. 1, no. 1 (2009) pp.~159--195.
\bibitem{Seid} A.~Seidenberg, \emph{Constructions in algebra}, Trans.~A.M.S.~{\bf 197} (1974) pp.~273--313.
\bibitem{Serre} J.~P.~Serre, \emph{Alg\`ebre Locale $\bullet$ Multiplicit\'es} (r\'edig\'e par P.~Gabriel), 
Seconde \'edition, Springer-Verlag
Lecture Notes in Mathematics {\bf 11}, 1965, Springer-Verlag, Berlin. 
\end{thebibliography}
\end{document}